\documentclass[11pt]{amsart}

\usepackage{amsfonts, amssymb, amsthm, amsmath, gensymb, dsfont}
\usepackage{graphicx, comment}
\usepackage[small]{caption}
\usepackage{subcaption}
\usepackage{epsfig}
\usepackage{tikz, pgfplots, float}

\usepackage[utf8]{inputenc}
\usepackage{fullpage}
\usepackage{framed}

\usepackage{enumerate}
\usepackage{url}
\usepackage[breaklinks]{hyperref}
\usepackage{cleveref}
\hypersetup{
	colorlinks = true, 
	urlcolor = cyan, 
	linkcolor = teal, 
	citecolor = cyan 
}

\newtheorem{theorem}{Theorem}
\newtheorem{definition}[theorem]{Definition}
\newtheorem{lemma}[theorem]{Lemma}

\crefname{claim}{claim}{claims}

\newtheorem{proposition}[theorem]{Proposition}
\newtheorem{corollary}[theorem]{Corollary}

\newcommand{\eps}{\varepsilon}

\newcommand{\ex}{\mathrm{ex}}
\newcommand{\cS}{\mathcal{S}}
\newcommand{\FF}{\boldsymbol{F}}
\newcommand{\cD}{\boldsymbol{\mathcal D}}
\newcommand{\cl}{c\ell}

\usepackage{tikz}
\tikzstyle{vtx} = [circle, fill, inner sep=0.7]

\title{Extremal problems for suspensions of even cycles}

\author{Dingyuan Liu}
\address{Dingyuan Liu \newline Karlsruhe Institute of Technology, Germany}
\email{liu@mathe.berlin}

\date{}

\begin{document}

\begin{abstract}
Given an integer $k\geq2$ and a graph $F$, the $k$-uniform suspension $\cS^kF$ is obtained by adjoining a fixed set of $k-2$ new vertices to every edge of $F$. In this paper, we study two extremal problems for suspensions of even cycles.

Write $K^k_t$ for the $k$-uniform clique of order $t$. Let $\ex(n,\cS^kC_{2\ell})$ and $\ex(n,K^k_t,\cS^kC_{2\ell})$ denote the maximum numbers of edges and copies of $K^k_t$, respectively, in an $\cS^kC_{2\ell}$-free $k$-uniform hypergraph on $n$ vertices. We prove that, for every $k\geq2$ and infinitely many $n$,
\[\ex(n,K^{k}_{k+1},\cS^kC_4)=\frac{n^{k-1/2}}{(k+1)!}+O(n^{k-1}).\]
This extends a folklore result for $k=2$ and, as an immediate consequence, yields the asymptotics of $\ex(n,\cS^kC_4)$ for infinitely many $n$, previously established by Mubayi (for all $n$).

Furthermore, for every $k\geq2$ and $\ell\in\{3,5\}$, we determine the order of magnitude
\[\ex(n,\cS^kC_{2\ell})=\Theta(n^{k-1+1/\ell}).\]
This generalizes both the classical graph case $k=2$ and a previous result of Mukherjee for $k=\ell=3$.

The principal difficulty in both problems lies in constructing the lower bounds. Our construction for $\ex(n,K^{k}_{k+1},\cS^kC_4)$ incorporates a novel block-packing structure, which yields substantially more copies of $K^k_{k+1}$ than previously known constructions. For $\ex(n,\cS^kC_{2\ell})$ with $\ell\in\{3,5\}$, we establish a natural $k$-uniform version of Wenger graphs, addressing the subtleties involved in lifting extremal graph constructions to suspensions. We also give applications of our results to Tur\'an problems for simplicial complexes.
\end{abstract}

\maketitle

\section{Introduction}
\label{sec:intro}
Given a graph $F$, the Tur\'an number $\ex(n,F)$ is the maximum number of edges in an $n$-vertex $F$-free graph. Building on earlier work of Tur\'an~\cite{T41}, the classical Erd\H{o}s--Stone--Simonovits theorem~\cite{ES66,ES46} states that, for any fixed graph $F$ containing at least one edge,
\[\ex(n,F)=\left(1-\frac{1}{\chi(F)-1}+o(1)\right)\binom{n}{2},\]
where $\chi(F)$ denotes the chromatic number of $F$. The theorem asymptotically determines the Tur\'an number of every non-bipartite graph, leaving the bipartite case as a major open problem in extremal graph theory. As even cycles are arguably among the most natural bipartite graphs, their Tur\'an numbers have been extensively studied. Bondy and Simonovits~\cite{BS74} proved that, for any fixed $\ell\geq2$,
\begin{equation}
\label{C6C10}
\ex(n,C_{2\ell})=O(n^{1+1/\ell}).
\end{equation}
Matching lower bounds up to a constant factor are known only for $\ell\in\{2,3,5\}$. The cases $\ell=3$ and $\ell=5$ date back to constructions of Benson and Singleton~\cite{Be66,S66}; see also Wenger~\cite{W91}. For the case $\ell=2$, one has a sharper estimate
\begin{equation}
\label{C4}
\frac{n^{3/2}}{2}-o(n^{3/2})\leq\ex(n,C_4)\leq\frac{n^{3/2}}{2}+O(n).
\end{equation}
The upper bound is due to Reiman~\cite{R58}, while matching constructions were obtained independently by Brown~\cite{Br66} and by Erd\H{o}s, R\'enyi, and S\'os~\cite{ERS66}; see also F\"uredi~\cite{F83}.

The Tur\'an problem for graphs extends readily to the hypergraph setting. For an integer $k\geq2$ and a $k$-uniform hypergraph $H$, let $\ex(n,H)$ denote the maximum number of edges in an $n$-vertex $H$-free $k$-uniform hypergraph. As one would expect, our understanding of $\ex(n,H)$ is considerably more limited than in the graph setting; see, e.g., the survey by Keevash~\cite{K11}. This motivates the study of hypergraphs whose extremal behavior might be governed by an underlying graph structure.

The suspension operation provides a natural way to lift a graph to a uniform hypergraph. Given an integer $k\geq2$ and a graph $F$, the $k$-uniform suspension $\cS^kF$ is obtained by adjoining a fixed set of $k-2$ new vertices to every edge of $F$. That is, $\cS^kF$ is a $k$-uniform hypergraph with vertex set $S\cup V(F)$ and edge set $\{S\cup e:\,e\in E(F)\}$, where $S$ is a $(k-2)$-element set disjoint from $V(F)$. The extremal numbers of suspension hypergraphs have long been studied in various formulations; see the discussion in~\cite{CGKZ26} for an overview. In particular, since $\cS^kF$ is $k$-partite if and only if $F$ is bipartite, by a classical result of Erd\H{o}s~\cite{E64}, we have
\[\ex(n,\cS^kF)=\begin{cases}
    \Theta(n^k) & \text{ if $\chi(F)\geq3$,}\\
    O(n^{k-\eps}) & \text{ if $\chi(F)\leq2$.}
\end{cases}\]
where $\eps>0$ is some constant depending on $F$. Thus, the order of magnitude of $\ex(n,\cS^kF)$ again remains undetermined only when $F$ is bipartite. Mubayi~\cite{M02} considered complete bipartite graphs $F=K_{s,t}$ and determined the order of magnitude for certain ranges of $s$ and $t$. In the special case $F=K_{2,2}=C_4$, he obtained a sharper estimate
\begin{equation}
\label{Mubayi_C4}
\frac{n^{k-1/2}}{k!}+o(n^{k-1/2})\leq\ex(n,\cS^kC_4)\leq\frac{n^{k-1/2}}{k!}+O(n^{k-1})
\end{equation}
for every $k\geq2$. The upper bound in~\eqref{Mubayi_C4} follows by combining~\eqref{C4} with a simple counting argument, while the lower bound follows by generalizing the construction of $C_4$-free graphs by F\"uredi~\cite{F83} to the $k$-uniform setting.

For $k$-uniform hypergraphs $T$ and $H$, the generalized Tur\'an number $\ex(n,T,H)$ is defined as the maximum number of copies of $T$ in an $n$-vertex $H$-free $k$-uniform hypergraph. Note that when $T$ is a single edge, we have $\ex(n,T,H)=\ex(n,H)$. The systematic study of $\ex(n,T,H)$ was initiated by Alon and Shikhelman~\cite{AS16} and has since been pursued extensively; see the survey by Gerbner and Palmer~\cite{GP25} for related results. One of the earliest results on generalized Tur\'an problems concerns the case $T=K_3$ and $H=C_4$. It is folklore that
\begin{equation}
\label{cliques_C4}
\frac{n^{3/2}}{6}-o(n^{3/2})\leq\ex(n,K_3,C_4)\leq\frac{n^{3/2}}{6}+O(n).
\end{equation}
The upper bound follows from the corresponding bound on $\ex(n,C_4)$ together with the observation that every edge of a $C_4$-free graph is contained in at most one triangle. The lower bound can be obtained from the same construction used for $\ex(n,C_4)$~\cite{Br66,ERS66,F83} or via the Erd\H{o}s--R\'enyi graph~\cite{ER62}. See also the proof by Alon and Shikhelman~\cite{AS16} of a more general result.

A natural hypergraph extension of~\eqref{cliques_C4} is to count cliques in an $\cS^kC_4$-free $k$-uniform hypergraph. Note that any such hypergraph contains no clique of order greater than $k+1$. Moreover, the case of cliques of order $k$ corresponds to precisely $\ex(n,\cS^kC_4)$, which was determined asymptotically by Mubayi~\cite{M02}. Therefore, the only remaining case of interest is $\ex(n,K^k_{k+1},\cS^kC_4)$. An upper bound analogous to that for $\ex(n,K_3,C_4)$ follows readily. Indeed, every edge of an $\cS^kC_4$-free $k$-uniform hypergraph is contained in at most one clique of order $k+1$, since two such cliques sharing an edge would give rise to a copy of $\cS^kC_4$. Together with~\eqref{Mubayi_C4}, this yields
\begin{equation}
\label{upper_bound}
\ex(n,K^k_{k+1},\cS^kC_4)\leq\frac{\ex(n,\cS^kC_4)}{k+1}=\frac{n^{k-1/2}}{(k+1)!}+O(n^{k-1}).
\end{equation}

On the other hand, if an $\cS^kC_4$-free $k$-uniform hypergraph contains $\frac{n^{k-1/2}}{(k+1)!}+O(n^{k-1})$ copies of $K^k_{k+1}$, then it necessarily has $\frac{n^{k-1/2}}{k!}+O(n^{k-1})$ edges, with almost every edge contained in one of the copies. This suggests that obtaining a lower bound matching~\eqref{upper_bound} is considerably more delicate than determining $\ex(n,\cS^kC_4)$. Indeed, Mubayi's construction~\cite{M02} for $\ex(n,\cS^kC_4)$ contains roughly $\binom{\sqrt{n}}{k+1}$ copies of $K^k_{k+1}$, which is of lower order than $n^{k-1/2}$ when $k\geq3$.

Here, we develop a new algebraic construction of an $\cS^kC_4$-free $k$-uniform hypergraph containing substantially more copies of $K^k_{k+1}$. The novelty of our construction lies in a block-packing structure, with each block corresponding to a copy of $K^k_{k+1}$, while the resulting hypergraph remains $\cS^kC_4$-free. Using this construction, we establish a tight lower bound on $\ex(n,K^k_{k+1},\cS^kC_4)$ for every $k\geq2$ and infinitely many values of $n$.

\begin{theorem}
\label{thm:C4}
Let $k\geq2$ be an integer and let $n=2^m$ be a power of two, where $m\geq2$. Then
\[\ex(n,K^k_{k+1},\cS^kC_4)\geq\frac{n^{k-1/2}}{(k+1)!}-n^{k-1}\]
if $m$ is even, and
\[\ex(n,K^k_{k+1},\cS^kC_4)\geq\frac{n^{k-1/2}}{\sqrt{2}(k+1)!}-n^{k-1}\]
if $m$ is odd.
\end{theorem}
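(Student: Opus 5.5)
The plan is to give an explicit algebraic construction; the matching upper bound~\eqref{upper_bound} already shows it must be essentially extremal. First consider $m$ even and put $q=2^{m/2}$, so $n=q^2$. I will take the vertex set $V=\mathbb F_q\times\mathbb F_q$, or an equivalent model of $\mathbb F_{q^2}$ as a $2$-dimensional $\mathbb F_q$-space. The hypergraph $H$ will be the union of a packing $\mathcal B$ of $(k+1)$-sets (the blocks), and its edges will be exactly the $k$-subsets of blocks. Then every block is a copy of $K^k_{k+1}$. The target is $|\mathcal B|\ge n^{k-1/2}/(k+1)!-n^{k-1}$, and two properties are needed.
\begin{enumerate}[(i)]
\item Two blocks share at most $k-1$ vertices, so that the edges in different blocks are distinct.
\item For every $(k-2)$-set $S$, the link graph $L_S=\{e:\ S\cup e\in H\}$ is $C_4$-free. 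This is exactly the condition that $H$ is $\cS^kC_4$-free.
\end{enumerate}

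For the blocks I will use a polynomial rule of polarity type. A $(k+1)$-set $\{(a_0,b_0),\dots,(a_k,b_k)\}$ with pairwise distinct first coordinates is a block iff for every $j$ the $k$-set obtained by deleting index $j$ satisfies an equation
\[
\sum_{i\ne j} b_i=P\bigl(a_0,\dots,\widehat{a_j},\dots,a_k\bigr),
\]
where $P$ is a symmetric polynomial such as $e_2$ (if needed, a higher-dimensional version over $\mathbb F_{q^2}$ with a norm/trace form). Characteristic $2$ is what makes these $k+1$ equations consistent with the block structure. Subtracting the equations for $j$ and $l$ gives $b_j+b_l=P(\cdots\widehat{a_j}\cdots)+P(\cdots\widehat{a_l}\cdots)$, a relation of the form $b_j+b_l=(a_j+a_l)\cdot(\text{symmetric function of the rest})$. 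The system therefore has rank one less than the number of unknowns, and the number of blocks is controlled by the free parameter.

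Fixing $S$ with first coordinates $s$, each $k$-set equation becomes
\[
b+d=ac+(a+c)\,e_1(s)+\mathrm{const}(S)
\]
in the remaining two vertices $(a,b),(c,d)$. After the shift $b\mapsto b+a\,e_1(s)$, this is the Erd\H{o}s--R\'enyi/Brown polarity graph $b'+d'=ac$. That graph is $C_4$-free because two points have at most one common neighbour: the common neighbours solve two linear equations, and these determine at most one point when $a\ne c$. This gives (ii). Property (i) should follow from the same linear algebra: $k$ vertices of a block determine the last one.

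Next comes the counting. I will count blocks by choosing the first coordinates and then the solutions of the linear system for the second coordinates. All tuples with repeated first coordinates, and those whose associated system degenerates, are discarded. These discarded tuples number $O(q^{2k-2})=O(n^{k-1})$, and I will bound them crudely by $n^{k-1}$. The main term should come out as
\[
\frac{q^{2k-1}}{(k+1)!}=\frac{n^{k-1/2}}{(k+1)!}.
\]
For $m$ odd, I will take $q=2^{(m-1)/2}$ and $V=\mathbb F_q\times\mathbb F_{2q}$ (or $\mathbb F_{2q}\times \mathbb F_q$ via a subfield/trace restriction), so that $|V|=n$ with $q=\sqrt{n/2}$. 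I then rerun the same argument, which loses exactly the factor $\sqrt2$.

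I expect the main obstacle to be calibrating the construction so that the count really reaches $q^{2k-1}$ blocks. A naive version over $\mathbb F_q\times\mathbb F_q$ with $P=e_2$ produces only about $q^{k+1}\cdot q^{O(1)}$ blocks, which is far too few when $k\ge3$. The fix I would try first is to let the coordinates live in $\mathbb F_q^{\,k-1}$-type spaces, or to use the field $\mathbb F_{q^2}$ with $k$-set equations given by norm and trace forms. The aim is that every $(k-1)$-set lies in about $\sqrt n$ blocks, i.e.\ that $L_S$ is a near-extremal $C_4$-free graph decomposed into about $n^{3/2}/6$ triangles. The second thing to check carefully is that, in this larger setup, every link $L_S$ still reduces by an affine change of variables to a polarity graph, so that $C_4$-freeness is preserved.
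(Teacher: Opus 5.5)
Your proposal has a genuine gap. For $k\ge3$ it never exhibits $\Omega(n^{k-1/2})$ blocks, and the framework you commit to cannot produce them.

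\textbf{Why your block rule is too rigid.} In your rule $\sum_{i\ne j}b_i=P(a_0,\dots,\widehat{a_j},\dots,a_k)$, $j=0,\dots,k$, the second coordinates enter through the fixed matrix $J+I$ (characteristic $2$). This matrix has rank at least $k$, whatever symmetric $P$ you choose. So the first coordinates determine the second ones up to at most one free parameter. That gives at most $q^{k+2}$ ordered tuples, and at most $q^{k+1}$ when $k+1$ is even. This is $o(q^{2k-1})$ for every $k\ge3$.

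With $P=e_2$ your construction is essentially Mubayi's, which the paper notes has only about $\binom{\sqrt n}{k+1}$ copies of $K^k_{k+1}$. You notice the problem yourself, but the remedies you mention (``$\mathbb F_q^{k-1}$-type spaces'', norm/trace forms over $\mathbb F_{q^2}$) are never specified. Your counting paragraph (main term $q^{2k-1}/(k+1)!$, error $O(q^{2k-2})$) is therefore asserted rather than derived. By \eqref{upper_bound}, any construction meeting the bound must have almost every edge inside a block. So this construction is the entire content of the theorem, not a detail.

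\textbf{The missing idea.} Let the coefficients of the second coordinates depend on the first coordinates, and impose the conditions on the block as a whole. The paper uses the following setup:
\begin{itemize}
\item $\Gamma=A\times\mathbb F_q$, with $q=2^{\lceil m/2\rceil}$ and $A\le(\mathbb F_q,+)$ of order $p=2^{\lfloor m/2\rfloor}$;
\item the alternating form $\langle x,y\rangle=x_1y_2+x_2y_1$;
\item as blocks, the $(k+1)$-sets $B$ with $\sigma(B)=0$ and $\phi(B)=1$.
\end{itemize}
Given $\sigma(B)=0$, the condition $\phi(B)=1$ reads $\sum_i a_ib_i=1$. So there are only three scalar constraints and about $p^kq^{k-1}=n^k/q$ ordered tuples.

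Exponent two gives your property (i): a $k$-set $e$ can only lie in the block $e\cup\{\sigma(e)\}$. For a $(k-2)$-set $L$ with $s=\sigma(L)$ and $d=\phi(L)$, every link edge $\{x,y\}$ satisfies $\langle x+s,y+s\rangle=1-d$. This is a translated polarity-type graph, and it is $C_4$-free only when $d\ne1$. For $d=1$ the relation says that $x+s$ and $y+s$ are collinear, which produces large cliques. This forces the third condition $\phi(L)\ne1$ for all $L\in\binom{B}{k-2}$. One then needs a separate check that this condition discards only $O(p^{k-1}q^{k-1})$ blocks. Your plan does not foresee this degenerate case.

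\textbf{The odd case.} For odd $m$, your $\mathbb F_{q}\times\mathbb F_{2q}$ (with your $q=2^{(m-1)/2}$) has no common multiplication, because the smaller field is generally not a subfield of the larger one. The paper needs only the additive structure of the smaller factor, and embeds it as the subgroup $A$ of the larger field. The loss of $\sqrt2$ then comes from the count $n^k/q$ with $q=\sqrt{2n}$.
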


In particular,~\Cref{thm:C4} and~\eqref{upper_bound} yield that
\[\ex(n,K^{k}_{k+1},\cS^kC_4)=\frac{n^{k-1/2}}{(k+1)!}+O(n^{k-1})\]
when $n$ ranges over the sequence of powers of four. For general $n$, we obtain the following bounds on $\ex(n,K^k_{k+1},\cS^kC_4)$ by combining~\Cref{thm:C4} with a random-sampling argument.

\begin{corollary}
\label{cor:C4}
Let $k\geq2$ be an integer. Then
\[\frac{n^{k-1/2}}{2^{2-3/(k+1)}(k+1)!}-O(n^{k-1})\leq\ex(n,K^k_{k+1},\cS^kC_4)\leq\frac{n^{k-1/2}}{(k+1)!}+O(n^{k-1}).\]
\end{corollary}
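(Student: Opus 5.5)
The plan is to deduce the lower bound from \Cref{thm:C4} by choosing a suitable power of two $N$ and passing to a random induced subhypergraph. The upper bound is exactly~\eqref{upper_bound}, so nothing remains to prove there.

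\textbf{Sampling lemma.} Fix $n$ and a power of two $N=2^m\geq n$, and let $H_N$ be the $\cS^kC_4$-free hypergraph from \Cref{thm:C4}. It has at least $c_m N^{k-1/2}/(k+1)!-N^{k-1}$ copies of $K^k_{k+1}$, where $c_m=1$ if $m$ is even and $c_m=1/\sqrt2$ if $m$ is odd. Choose a uniformly random $n$-subset $U\subseteq V(H_N)$ and consider the induced hypergraph $H_N[U]$. Being $\cS^kC_4$-free is inherited by induced subhypergraphs. Each fixed copy of $K^k_{k+1}$ survives with probability
\[\frac{(n)_{k+1}}{(N)_{k+1}}\geq\Big(\frac nN\Big)^{k+1}-O(1/n),\]
using $N=O(n)$. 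Hence some choice of $U$ gives at least
\[\frac{c_m}{(k+1)!}\,n^{k+1}N^{-3/2}-O(n^{k-1})\]
copies. If instead $N\leq n$, we simply add $n-N$ isolated vertices to $H_N$. This gives $c_mN^{k-1/2}/(k+1)!-O(n^{k-1})$ copies.

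\textbf{Optimization.} Write $n=2^{2j}t$ with $t\in[1,4)$. The candidate choices are $N\in\{4^j,\,2\cdot4^j,\,4^{j+1}\}$: take $N$ below $n$ with padding, or above $n$ with sampling. For each candidate, the computation above gives a factor $f_N(t)$ with $n^{k-1/2}f_N(t)/(k+1)!$ copies:
\begin{itemize}
\item $N=4^j\le n$ (padding): $f(t)=t^{-(k-1/2)}$;
\item $N=2\cdot4^j$, sampling when $t\le2$: $f(t)=\tfrac1{\sqrt2}(t/2)^{k+1}2^{k-1/2}\,t^{-(k-1/2)}$, which simplifies to $t^{3/2}/4$; padding when $t\ge2$ gives the analogous expression;
\item $N=4^{j+1}$ (sampling): $f(t)=t^{3/2}/8$.
\end{itemize}
The corollary's constant should be $\min_{t\in[1,4)}\max_N f_N(t)$. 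I expect this minimum to occur where a decreasing padding curve crosses an increasing sampling curve. Equating $t^{-(k-1/2)}$ with the relevant sampling factor gives a crossing point $t^*$ of the form $2^{\alpha/(k+1)}$. Substituting $t^*$ back should produce exactly $2^{-(2-3/(k+1))}$.

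\textbf{Main obstacle.} The probabilistic part is routine. The main work is this bookkeeping: identifying which pair of strategies is binding over each subinterval of $t$, with the parity-dependent constant $1/\sqrt2$ correctly accounted for. I would tabulate the curves on $[1,2]$ and $[2,4]$ separately. Then I would check that their lower envelope has its global minimum at the crossing point giving the stated exponent $2-3/(k+1)$, with the $-N^{k-1}$ and falling-factorial errors absorbed into $O(n^{k-1})$. If the formulas do not reproduce that constant, the fallback is to retain whatever constant the envelope gives: it is still a valid lower bound of the form $c\,n^{k-1/2}/(k+1)!-O(n^{k-1})$, and the optimization can then be revisited.
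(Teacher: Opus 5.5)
Your approach is essentially the paper's: the construction on $2^{\lfloor\log_2 n\rfloor}$ vertices padded with isolated vertices, played off against a random induced $n$-vertex subhypergraph of the construction on the next power of two. Your cases $t\in[1,2)$ and $t\in[2,4)$ are the paper's cases $m$ even ($x=t$) and $m$ odd ($x=t/2$), and your extra candidates are dominated. The optimization you left as expected does check out. On $[1,2]$ the curves $t^{-(k-1/2)}$ and $t^{3/2}/4$ cross at $t^*=2^{2/(k+1)}$ with value $2^{-(2-3/(k+1))}$. On $[2,4)$ the binding pair $2^{k-1}t^{-(k-1/2)}$ and $t^{3/2}/8$ cross at the value $2^{-3k/(2(k+1))}$, which is at least $2^{-(2-3/(k+1))}$ for $k\geq 2$. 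So the stated constant follows, with the error terms absorbed into $O(n^{k-1})$ as you describe.
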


We remark that the constant $2^{2-3/(k+1)}$ appearing in the denominator of the leading term is not optimal even within our proof. However, since we suspect that the optimal constant is $1$ for all $n$, we make no attempt here to optimize it further.

Unfortunately, the construction for~\Cref{thm:C4} does not extend to suspensions of longer cycles in a straightforward way. As discussed earlier, estimating $\ex(n,K^k_{k+1},\cS^kC_{2\ell})$ is considerably more difficult than estimating $\ex(n,\cS^kC_{2\ell})$. Since the order of magnitude of $\ex(n,\cS^kC_{2\ell})$ remains unknown in general, an estimate of the correct order for $\ex(n,K^k_{k+1},\cS^kC_{2\ell})$ seems out of reach. Nevertheless, using a probabilistic construction, we can obtain the following lower bound.

\begin{proposition}
\label{prop:longer_cycles}
Let $k\geq2$ and $\ell\geq3$ be integers. For every integer $t$ with $k\leq t<k+\ell-2$,
\[\ex(n,K^k_t,\cS^kC_\ell)=\Omega(n^{k-1+1/(\ell-1)})\]
and
\[\ex(n,K^k_t,\cS^kC_\ell)=\begin{cases}
O(n^k) & \text{if $\ell$ is odd,}\\
O(n^{k-1+2/\ell}) & \text{if $\ell$ is even.}
\end{cases}\]
\end{proposition}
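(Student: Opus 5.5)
The upper bounds reduce to graph results via links; the lower bound comes from a random packing of $t$-cliques followed by deletion. Throughout, for a $(k-2)$-set $S$ write $L(S)$ for the link graph of $S$: its vertex set is $V\setminus S$, and $e$ is an edge whenever $S\cup e$ is an edge of the hypergraph.

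\emph{Upper bounds.} For odd $\ell$ the bound $O(n^k)$ is trivial, since there are at most $\binom nt\le\binom nk n^{t-k}$ copies of $K^k_t$ and $t-k<\ell-2$ is bounded. I would in fact simply record $\binom nt=O(n^{t})$, which is what the statement intends for fixed $t$.

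For even $\ell=2m$, I count copies of $K^k_t$ through their $(k-2)$-subsets. If $H$ is $\cS^kC_\ell$-free, then every link graph $L(S)$ is $C_\ell$-free, since a $C_\ell$ in $L(S)$ together with $S$ is exactly a copy of $\cS^kC_\ell$. A copy of $K^k_t$ containing $S$ yields a copy of $K_{t-k+2}$ in $L(S)$, and each clique is counted $\binom{t}{k-2}$ times in this way. Hence the number of copies of $K^k_t$ is at most
\[
\binom{n}{k-2}\,\ex(n,K_{t-k+2},C_{2m}).
\]
I then invoke the known bound $\ex(n,K_r,C_{2m})=O(n^{1+1/m})$ (Gerbner--Gy\H{o}ri--Methuku--Vizer); for $r=2$ this is just the Bondy--Simonovits bound~\eqref{C6C10}. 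This gives $O(n^{k-2}\cdot n^{1+2/\ell})=O(n^{k-1+2/\ell})$.

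\emph{Lower bound.} I would use a random block construction with deletion.
\begin{itemize}
\item Select each $t$-subset of $[n]$ independently with probability $p=\tfrac{c}{2}\,n^{-(t-k+1)+1/(\ell-1)}$, and let $H$ be the union of the complete $k$-graphs on the selected $t$-sets (the blocks). The expected number of blocks is $\Theta(n^tp)=\Theta(n^{k-1+1/(\ell-1)})$.
\item Delete one block from every pair of blocks sharing at least $k$ vertices. The expected number of such pairs is $O(n^{2t-k}p^2)$, which is $o(n^tp)$ because $t-k+1\ge1>1/(\ell-1)$. After this step distinct blocks share no edge, and each surviving block is a copy of $K^k_t$.
\item Delete one block from every copy of $\cS^kC_\ell$. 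Since a block has $t<k+\ell-2=|V(\cS^kC_\ell)|$ vertices, no single block contains such a copy, so every copy is covered by edges from $j\ge2$ distinct blocks.
\end{itemize}

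\emph{Main obstacle.} The crux is to show that the expected number of minimal bad configurations is at most half the expected number of blocks. A bad configuration consists of $j\ge2$ blocks $B_1,\dots,B_j$ together with a copy of $\cS^kC_\ell$ whose edges all lie in $\bigcup B_i$. For it to be present, each block must contain $S$ plus at least two consecutive cycle vertices, so it has at most $t-k$ vertices outside $S\cup V(C_\ell)$. Counting vertices, the configuration spans at most $k-2+\ell+j(t-k)$ vertices. Hence its expected count is at most
\[
n^{k-2+\ell+j(t-k)}p^{j}=n^{k-2+\ell+j(t-k)-j(t-k+1)+j/(\ell-1)}=n^{k-2+\ell-j+j/(\ell-1)}.
\]
This exponent is maximised at the smallest admissible $j$. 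The difficulty is that two blocks can cover the cycle only if they share $S$ and at least two cycle vertices, and these extra overlaps lose further factors of $n$. The careful accounting I would carry out is this: with $a_i$ denoting the number of cycle vertices in $B_i$ and the overlaps tracked, one obtains exponent at most $k-1+\tfrac{1}{\ell-1}-\varepsilon_j$ whenever $j<\ell$. The extremal case is $j=\ell$, with each block covering exactly one cycle edge. That case gives exactly $n^{k-2+\ell-\ell+\ell/(\ell-1)}=n^{k-1+1/(\ell-1)}$, matching the block count. The constant $c$ is then chosen small, since this term scales as $p^\ell$ versus $p$, to make it at most a quarter of the expected number of blocks.

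Linearity of expectation and deletion then leave $\Omega(n^{k-1+1/(\ell-1)})$ edge-disjoint copies of $K^k_t$ in an $\cS^kC_\ell$-free hypergraph. I expect the overlap case analysis for $2\le j<\ell$ to be the delicate step.
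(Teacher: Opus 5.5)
You have two genuine gaps: the odd-$\ell$ upper bound is argued incorrectly, and the key estimate in the lower bound is asserted rather than proved.

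\textbf{Odd-$\ell$ upper bound.} The inequality $\binom nt\le\binom nk n^{t-k}$ only gives $O(n^t)$. The factor $n^{t-k}$ is unbounded even though $t-k$ is bounded. For $\ell\ge5$ one may take $t>k$, and the statement really claims $O(n^k)$ there, not $O(n^t)$. A complete $k$-graph has $\Theta(n^t)$ copies of $K^k_t$, so $\cS^kC_\ell$-freeness must be used. You need to run your link double count for odd $\ell$ too. That requires bounding the number of copies of $K_r$, with $r=t-k+2<\ell$, in a $C_\ell$-free graph by $O(n^2)$. The paper does this directly:
\begin{itemize}
\item In a $C_\ell$-free graph, the neighbourhood of each vertex contains no path on $\ell-1$ vertices, so it is $(\ell-3)$-degenerate.
\item Hence the graph has at most $(\ell-3)^{r-2}|E(G)|$ copies of $K_r$.
\item Combined with $|E(G_L)|\le\binom n2$ (odd $\ell$) or Bondy--Simonovits (even $\ell$), this handles both parities at once.
\end{itemize}
It also makes your appeal to a generalized Tur\'an theorem for even cycles unnecessary.

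\textbf{Lower bound.} The gap is exactly at the step you flag. The estimate you actually derive, $n^{k-2+\ell-j+j/(\ell-1)}$, is decreasing in $j$. At $j=2$ it already exceeds the block count $n^{k-1+1/(\ell-1)}$; for $\ell=3$ it equals $n^k$. So it proves nothing, and the improved bound with an $\varepsilon_j$ loss for $j<\ell$ is only claimed. The loss comes from using only that each block contains one cycle edge. The paper's accounting needs no overlap case analysis:
\begin{itemize}
\item Fix the copy $F$ of $\cS^kC_\ell$ ($O(n^{k+\ell-2})$ choices).
\item Fix an ordered partition $E(F)=E_1\dot\cup\dots\dot\cup E_j$ with $\emptyset\neq E_i\subseteq\binom{B_i}{k}$ ($O(1)$ choices).
\item Since $|E_i|\le\ell-1$, the cycle edges in $E_i$ span at least $|E_i|+1$ cycle vertices. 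So $B_i$ contains at least $k+|E_i|-1$ prescribed vertices and has at most $n^{t-k-|E_i|+1}$ choices.
\item Using $\sum_i|E_i|=\ell$, the expected number of configurations is $O(n^{k+\ell-2}\cdot n^{j(t-k+1)-\ell}p^j)=O(n^{k-2+j/(\ell-1)})$.
\end{itemize}
This exponent is now increasing in $j$. It is at most $k-1$ for $j<\ell$. At $j=\ell$ it gives $O(c^\ell n^{k-1+1/(\ell-1)})$, which is absorbed by choosing $c$ small.

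Your deletion of block pairs sharing $k$ vertices is harmless but unnecessary, since distinct $t$-sets already give distinct copies of $K^k_t$.
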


A special case of~\Cref{prop:longer_cycles} yields
\[\Omega(n^{k-1+1/(2\ell-1)})\leq\ex(n,\cS^kC_{2\ell})\leq O(n^{k-1+1/\ell}),\]
which are the best known general bounds for all fixed $k,\ell\geq2$.

Recall that, as with $C_4$, the Tur\'an numbers of $C_6$ and $C_{10}$ are determined up to a constant factor. One would then expect to determine $\ex(n,\cS^kC_{2\ell})$ up to a constant factor for $\ell\in\{3,5\}$ as well. Nevertheless, when $k\neq2$, the only previously known case is due to Mukherjee~\cite{M24}, who proved that $\ex_3(n,\cS^3C_6)=\Theta(n^{7/3})$. The obstacle lies in establishing the lower bounds. In contrast to the case of $C_4$, where the construction from~\cite{F83} can be extended in a relatively straightforward way, lifting the constructions for $C_6$ and $C_{10}$~\cite{Be66,S66,W91} to higher uniformity appears rather non-trivial. This difficulty presumably led Mukherjee~\cite{M24} to develop a different construction for $\cS^3C_6$.

Our second main result breaks through the barrier and, by establishing $k$-uniform analogues of Wenger's constructions~\cite{W91}, determines the order of magnitude of $\ex(n,\cS^kC_{2\ell})$ for every $k\geq2$ and $\ell\in\{3,5\}$.

\begin{theorem}
\label{thm:C6C10}
Let $k\geq2$ be an integer and $\ell\in\{3,5\}$. Then
\[\ex(n,\cS^kC_{2\ell})=\Theta(n^{k-1+1/\ell}).\]
\end{theorem}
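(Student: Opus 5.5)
The upper bound $\ex(n,\cS^kC_{2\ell})=O(n^{k-1+1/\ell})$ follows from the Bondy--Simonovits bound~\eqref{C6C10} by a link-graph counting argument, as in the derivation of~\eqref{upper_bound}. Let $H$ be an $\cS^kC_{2\ell}$-free $k$-uniform hypergraph on $n$ vertices. For each $(k-2)$-set $S\subseteq V(H)$, the link graph $L_S=\{e\in\binom{V(H)\setminus S}{2}: S\cup e\in E(H)\}$ is $C_{2\ell}$-free, since a copy of $C_{2\ell}$ in $L_S$ together with $S$ is a copy of $\cS^kC_{2\ell}$. Every edge of $H$ is counted exactly $\binom{k}{2}$ times in $\sum_S|L_S|$. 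Hence
\[
|E(H)|=\binom{k}{2}^{-1}\sum_{S}|L_S|\leq \binom{k}{2}^{-1}\binom{n}{k-2}\,\ex(n,C_{2\ell})=O(n^{k-1+1/\ell}).
\]

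For the lower bound I would build a $k$-partite analogue of Wenger's graphs. Let $q$ be a prime power and $\ell\in\{3,5\}$, and set $d=\ell-1$. Wenger's graph $W_d(q)$ has parts $P=\mathbb F_q^{d}$ and $L=\mathbb F_q^{d}$. A point $p$ is adjacent to a line $x$ if and only if
\[
x_i+p_i=p_1x_{i-1}\quad\text{for } i=2,\dots,d,
\]
roughly speaking. Each vertex has degree $q$, and $W_d(q)$ contains no $C_{2\ell}$ for $d\in\{2,4\}$.

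My plan is to take $k$ parts $V_1,\dots,V_k$, each a copy of $\mathbb F_q^{d}$, with a "slope" coordinate attached. A $k$-tuple $(v_1,\dots,v_k)$ is an edge if it satisfies $d-1$ polynomial equations generalising the incidence above, for instance
\[
\sum_j v_{j,i}=\sigma_{i-1}(\text{slopes})\quad\text{for } i=2,\dots,d,
\]
or a multilinear version in which each coordinate equation uses the product of slopes of the other parts. The requirement is that, once any $k-2$ vertices are fixed, the link between the remaining two parts is isomorphic to (a subgraph of) a Wenger graph $W_d(q)$, or a union of such graphs. In that case every link is $C_{2\ell}$-free. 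Since a copy of $\cS^kC_{2\ell}$ with a fixed set $S$ lies inside a single link, and in the $k$-partite setting the set $S$ picks one vertex from each of $k-2$ parts, this would give $\cS^kC_{2\ell}$-freeness.

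Two points need care. First, the links must be $C_{2\ell}$-free even when $S$ meets the parts unevenly. If we insist that the hypergraph is $k$-partite and that every edge meets every part, a $C_{2\ell}$ in the link must alternate between the two parts not met by $S$, so this case reduces to Wenger's graph. If $S$ meets some part twice, the link is empty. Second, the edge count should be about $q^{kd}\cdot q^{-(d-1)}$. With $n=kq^d$ this gives $n^{k-(d-1)/d}=n^{k-1+1/\ell}$, which matches the upper bound. For general $n$, Bertrand-type prime density interpolates between the admissible values of $q$.

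The main obstacle is designing the defining equations so that each two-part link is \emph{exactly} a Wenger-type incidence structure, with an affine change of coordinates absorbing the fixed vertices. Naive symmetric sums fail because fixing $k-2$ vertices shifts coordinates in a slope-dependent, nonlinear way. I would first try equations of the form
\[
\sum_{j=1}^k v_{j,i}=\sum_{j<j'}\phi_i\bigl(v_{j},v_{j'}\bigr),
\]
where $\phi_i$ is built from Wenger's bilinear terms $p_1x_{i-1}$. I would then check that, after fixing $k-2$ vertices, the cross terms with the fixed vertices are linear in the free vertices and can be absorbed by a translation of coordinates. Once this is done, the cycle-freeness follows directly from the $C_6$- and $C_{10}$-freeness of $W_2(q)$ and $W_4(q)$.
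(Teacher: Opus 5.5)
Your upper bound is correct and is the same link-graph count the paper uses (it is \Cref{prop:longer_cycles} with $t=k$). The lower bound has a genuine gap. You correctly identify the key requirement: every $2$-part link should become a Wenger graph after a coordinate change that absorbs the fixed vertices. But you never produce equations that achieve this, and the concrete candidates you name do not work. Suppose the right-hand sides depend only on the slopes and are multilinear in them, as in your $\sigma_{i-1}(\text{slopes})$ and product-of-slopes suggestions. Once $L$ is fixed, the only term in the $d$-th equation coupling the free vertices $u\in V_a$, $v\in V_b$ is $\lambda_d u_1v_1$. After shifting coordinates and subtracting multiples of one equation from the others, the link becomes a disjoint union of copies of the $C_4$-free graph $u_2+v_2=\lambda u_1v_1$, or of $K_{q,q}$'s if every $\lambda_d=0$. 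These have about $N^{3/2}$ edges on $N$ vertices, so they contain $C_6$ and $C_{10}$. With the recursive terms $p_1x_{i-1}$, a fixed vertex $w$ preceding a free $v$ contributes $w_1v_{i-1}$. This term sits on a non-first coordinate and is not removed by a translation. The triangular change needed to remove it feeds back into the free--free term $u_1v_{i-1}$, and you have not checked that this closes up.

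The missing idea is exactly the paper's \Cref{lem:Wenger_C6C10}. Keep all nonlinearity on the first coordinates, and use distinct powers of a single slope summed over ordered pairs of parts: $\{(1,x_1),\dots,(k,x_k)\}$ is an edge iff $\sum_i x_{i,d}=\sum_{i<j}x_{i,1}x_{j,1}^{d-1}$ for $d=2,\dots,\ell$. If $L$ misses only the parts $a<b$, the $d$-th link equation reads $u_d+v_d=u_1v_1^{d-1}+P_d(u_1)+Q_d(v_1)+c_d$. Here $P_d,Q_d$ are polynomials and $c_d$ is a constant, all determined by $L$. The bijections $\Phi(u)=(u_1,u_2-P_2(u_1),\dots,u_\ell-P_\ell(u_1))$ and $\Psi(v)=(v_1,v_2-Q_2(v_1)-c_2,\dots,v_\ell-Q_\ell(v_1)-c_\ell)$ fix the slopes and map the link onto $W_{\ell,q}$. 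Your worry about slope-dependent shifts disappears precisely because the slope coordinate is never moved.

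Separately, your parameters are off by one. With $d=\ell-1$, parts $\mathbb{F}_q^{d}$ and $d-1$ equations, the graph you call $W_d(q)$ is, for $\ell=3$, the $C_4$-free graph, which contains $C_6$. Also, $q^{kd-(d-1)}$ edges on $kq^d$ vertices gives exponent $k-1+1/(\ell-1)$, not $k-1+1/\ell$, which exceeds your own upper bound. You need parts $\mathbb{F}_q^{\ell}$ with $\ell-1$ equations, which gives $q^{k\ell-\ell+1}$ edges on $kq^\ell$ vertices.
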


The paper is structured as follows. In~\Cref{sec:C4} we prove~\Cref{thm:C4} and~\Cref{cor:C4}. In~\Cref{sec:C_ell} we give the short proof of~\Cref{prop:longer_cycles}. In~\Cref{sec:C6C10} we establish~\Cref{thm:C6C10}. Finally, in~\Cref{sec:applications} we present applications of our results to Tur\'an problems for simplicial complexes.

\section{Counting cliques in $\cS^kC_4$-free hypergraphs}
\label{sec:C4}
This section is devoted to proving~\Cref{thm:C4} and~\Cref{cor:C4}. We will first present in~\Cref{sec:construction} the block-packing construction, which serves as the cornerstone of our proofs, and verify that this construction is $\cS^kC_4$-free. In~\Cref{sec:counting} we show that this construction contains sufficiently many blocks, each corresponding to a copy of $K^k_{k+1}$. In~\Cref{sec:proof_thm} we prove~\Cref{thm:C4}, and in~\Cref{sec:proof_cor} we deduce~\Cref{cor:C4} from~\Cref{thm:C4} via a random-sampling argument.

\subsection{The block-packing construction}
\label{sec:construction}
Recall that, a matching lower bound on $\ex(n,K^k_{k+1},\cS^kC_4)$ requires an $\cS^kC_4$-free hypergraph where almost every edge is contained in exactly one copy of $K^k_{k+1}$. This naturally suggests building such hypergraphs not edge by edge, but rather by packing copies of $K^k_{k+1}$. Our goal is to pack as many copies of $K^k_{k+1}$ as possible while keeping their union $\cS^kC_4$-free. We achieve this by selecting the copies subject to tailored algebraic constraints.

Let $2\leq p\leq q$ be powers of two. Let $A\leq(\mathbb{F}_q,+)$ be an additive subgroup of order $p$. Let
\[\Gamma=A\times\mathbb{F}_q\subseteq\mathbb{F}_q^2\]
be an additive group. Since $p$ and $q$ are both powers of two, the group $\Gamma$ has exponent two, that is, $x+x=0$ for all $x\in\Gamma$. This property is essential to our construction and leads to the restriction on the values of $n$ in~\Cref{thm:C4}.

For $x=(x_1,x_2)$ and $y=(y_1,y_2)$ in $\mathbb{F}_q^2$, define the symmetric bilinear form
\[\langle{x,y}\rangle=x_1y_2+x_2y_1.\]
Since $\mathbb{F}_q$ has characteristic two, $\langle{x,x}\rangle=0$ holds for all $x\in\mathbb{F}_q^2$. For a subset $X\subseteq\Gamma$, let
\[\sigma(X)=\sum_{x\in X}x\qquad\text{and}\qquad\phi(X)=\sum_{\{x,y\}\in\binom{X}{2}}\langle{x,y}\rangle.\]
Observe that if $X,Y\subseteq\Gamma$ are disjoint, then
\begin{equation}
\label{eq:phi_union}
  \phi(X\cup Y)=\phi(X)+\phi(Y)+\langle\sigma(X),\sigma(Y)\rangle.
\end{equation}

Let $k\geq2$ be an integer. Let $\boldsymbol{\mathcal{B}}$ be the family of all $B\in\binom\Gamma{k+1}$ satisfying
\begin{equation}
\label{eq:block_conditions}
\sigma(B)=0,\qquad\phi(B)=1,\qquad\text{and}\quad\phi(L)\neq1\quad\text{for every }L\in\binom{B}{k-2}.
\end{equation}
The members of $\boldsymbol{\mathcal{B}}$ are the so-called blocks. We define the $k$-uniform hypergraph $\mathcal{H}_{p,q}$ by letting
\[V(\mathcal{H}_{p,q})=\Gamma\qquad\text{and}\qquad E(\mathcal{H}_{p,q})=\bigcup_{B\in\boldsymbol{\mathcal{B}}}\binom{B}{k}.\]
It is immediate that each block in $\boldsymbol{\mathcal{B}}$ corresponds to a copy of $K^k_{k+1}$ in $\mathcal{H}_{p,q}$.

Below we show that $\mathcal{H}_{p,q}$ is $\cS^kC_4$-free. Note that the exponent-two property of $\Gamma$ plays a crucial role in the proof.

\begin{lemma}
\label{lem:block_C4}
The $k$-uniform hypergraph $\mathcal{H}_{p,q}$ is $S^kC_4$-free.
\end{lemma}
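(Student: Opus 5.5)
The plan is to show that every edge of $\mathcal{H}_{p,q}$ satisfies a simple algebraic identity, so that a copy of $\cS^kC_4$ becomes a $4$-cycle of a fixed nonzero value of the bilinear form in $\mathbb{F}_q^2$, which is impossible in dimension two.

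\emph{Step 1: edges satisfy $\phi(e)=1$.} Let $e$ be an edge and let $B\in\boldsymbol{\mathcal{B}}$ be a block with $e\subseteq B$. Write $B=e\cup\{z\}$. Since $\sigma(B)=0$ and $\Gamma$ has exponent two, $z=\sigma(e)$. Applying~\eqref{eq:phi_union} gives
\[
1=\phi(B)=\phi(e)+\langle\sigma(e),z\rangle=\phi(e)+\langle z,z\rangle=\phi(e),
\]
because $\langle z,z\rangle=0$ in characteristic two. So $\phi(e)=1$ for every edge.

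\emph{Step 2: the core has $\phi(S)\neq 1$.} Suppose $\mathcal{H}_{p,q}$ contains a copy of $\cS^kC_4$ with core $S$, where $|S|=k-2$, and distinct cycle vertices $a,b,c,d\notin S$. The edges are $S\cup\{a,b\}$, $S\cup\{b,c\}$, $S\cup\{c,d\}$, $S\cup\{d,a\}$. The set $S$ is a $(k-2)$-subset of the block containing $S\cup\{a,b\}$. By the last condition in~\eqref{eq:block_conditions}, $\phi(S)\neq1$. Hence $c_0:=1+\phi(S)\neq0$.

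\emph{Step 3: translate to a condition on pairs.} Let $s=\sigma(S)$. For each cycle edge $\{x,y\}$, apply~\eqref{eq:phi_union} twice and use Step 1:
\[
1=\phi(S\cup\{x,y\})=\phi(S)+\langle x,y\rangle+\langle s,x+y\rangle .
\]
Now shift by $s$, setting $x'=x+s$. Expanding by bilinearity and using $\langle s,s\rangle=0$ gives
\[
\langle x',y'\rangle=\langle x,y\rangle+\langle s,x+y\rangle .
\]
So every cycle edge satisfies $\langle x',y'\rangle=c_0$. The shifted points $a',b',c',d'$ are still pairwise distinct. The problem is now: four distinct points of $\mathbb{F}_q^2$ with
\[
\langle a',b'\rangle=\langle b',c'\rangle=\langle c',d'\rangle=\langle d',a'\rangle=c_0\neq0 .
\]

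\emph{Step 4: rule this out in dimension two.} This is the key step, and it uses three facts: the form $\langle x,y\rangle=x_1y_2+x_2y_1$ is nondegenerate, it is alternating, and the space is $2$-dimensional.
\begin{itemize}
\item[(i)] From $\langle a',b'\rangle=\langle c',b'\rangle$ and $\langle a',d'\rangle=\langle c',d'\rangle$ we get $\langle u,b'\rangle=\langle u,d'\rangle=0$, where $u=a'+c'\neq0$.
\item[(ii)] Since the form is alternating, $\langle u,u\rangle=0$. By nondegeneracy, $u^\perp$ is $1$-dimensional. Hence $u^\perp=\mathbb{F}_q u$.
\item[(iii)] So $b'=\lambda u$ and $d'=\mu u$ for some $\lambda,\mu\in\mathbb{F}_q$. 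Then $\langle a',b'\rangle=\lambda\langle a',c'\rangle=c_0$ and $\langle a',d'\rangle=\mu\langle a',c'\rangle=c_0$.
\item[(iv)] As $c_0\neq0$, we have $\langle a',c'\rangle\neq0$. Therefore $\lambda=\mu$, so $b'=d'$ and $b=d$, a contradiction.
\end{itemize}

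I expect the main care to be needed in Step 3. One has to check that the shift by $s$ absorbs all the cross terms. One also has to use the condition on $(k-2)$-subsets exactly to guarantee $c_0\neq0$; without it, points on a common line through the origin would give $4$-cycles of value $0$.
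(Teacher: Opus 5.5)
Your proof is correct and follows essentially the same route as the paper. Both arguments use the last block condition to get $1+\phi(S)\neq 0$. Both shift by $s=\sigma(S)$ so that the cycle edges through $S$ satisfy $\langle x+s,y+s\rangle=1+\phi(S)$, and both then use nondegeneracy of the form on $\mathbb{F}_q^2$ to show that two distinct vertices cannot have two distinct common neighbours. The differences are only cosmetic: you first isolate the fact that $\phi(e)=1$ for every edge, and you finish via $u^\perp=\mathbb{F}_q u$ rather than the paper's argument that $x+s,y+s$ are linearly independent, so the linear system for a common neighbour has at most one solution.
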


\begin{proof}[Proof of~\Cref{lem:block_C4}]
For any $L\in\binom{\Gamma}{k-2}$, define the link graph $G_L$ as the graph on $\Gamma$, where $\{x,y\}\in E(G_L)$ if and only if $L\cup\{x,y\}\in E(\mathcal{H}_{p,q})$. It suffices to prove that $G_L$ is $C_4$-free for every $L\in\binom{\Gamma}{k-2}$.

Fix an arbitrary $L\in\binom{\Gamma}{k-2}$. Let
\[s=\sigma(L)\qquad\text{and}\qquad d=\phi(L).\]
If $d=1$, the last condition in~\eqref{eq:block_conditions} implies that $G_L$ is empty. Thus, assume without loss of generality that $d\neq1$. For any $\{x,y\}\in E(G_L)$, since $L\cup\{x,y\}\in E(\mathcal{H}_{p,q})$, there is a vertex $z\in\Gamma$ such that
$L\cup\{x,y,z\}\in\boldsymbol{\mathcal{B}}$. Then from~\eqref{eq:block_conditions} it follows that
\[0=\sigma(L\cup\{x,y,z\})=s+x+y+z.\]
Since $\Gamma$ has exponent two, $s=x+y+z$. By~\eqref{eq:phi_union} and~\eqref{eq:block_conditions} we have
\[1=\phi(L\cup\{x,y,z\})=d+\phi(\{x,y,z\})+\langle{s,x+y+z}\rangle=d+\phi(\{x,y,z\}).\]
Moreover,
\begin{equation}
\label{eq:breaking_up}
\phi(\{x,y,z\})=\langle{x,y}\rangle+\langle{x,z}\rangle+\langle{y,z}\rangle=\langle{x+z,y+z}\rangle=\langle{x+s,y+s}\rangle,
\end{equation}
where in the second equality we used $\langle{z,z}\rangle=0$ and in the last equality we used $x+z=y+s$ and $y+z=x+s$. Therefore, every edge $\{x,y\}\in E(G_L)$ satisfies
\[\langle{x+s,y+s}\rangle=1-d\neq0.\]

Suppose that two distinct vertices $x,y\in\Gamma$ have a common neighbor $z$ in $G_L$. Then
\begin{equation}
\label{eq:linear_equations}
\langle{x+s,z+s}\rangle=1-d\qquad\text{and}\qquad\langle{y+s,z+s}\rangle=1-d.
\end{equation}
Since $x\neq y$, we have that $x+s$ and $y+s$ are linearly independent in $\mathbb{F}_q^2$. Indeed, suppose that $x+s=\lambda(y+s)$. Then from~\eqref{eq:linear_equations} it follows that
\[1-d=\langle{x+s,z+s}\rangle=\lambda\langle{y+s,z+s}\rangle=\lambda(1-d).\]
As $1-d\neq0$, this implies $\lambda=1$ and thus $x=y$, a contradiction. Accordingly, there is at most one solution $z\in\mathbb{F}_q^2$ for the linear equation system~\eqref{eq:linear_equations}. Therefore, every pair of vertices has at most one common neighbor in $G_L$, implying that $G_L$ is $C_4$-free. This completes the proof of~\Cref{lem:block_C4}.
\end{proof}

\subsection{Counting blocks}
\label{sec:counting}
We shall repeatedly use the elementary fact (see, e.g., Axler~\cite{A24} for a proof) that if $f:\mathbb{F}_q^d\to\mathbb{F}_q$ is a non-zero linear map, then $|f^{-1}(x)|=q^{d-1}$ for all $x\in\mathbb{F}_q$.

Let $k\geq2$ be an integer and let $2\leq p\leq q$ be powers of two. Let $\mathcal{H}_{p,q}$ be the $k$-uniform hypergraph defined in~\Cref{sec:construction}.

\begin{lemma}
\label{lem:counting_blocks}
$\mathcal{H}_{p,q}$ contains at least
\[\frac{p^kq^{k-1}}{(k+1)!}-p^{k-1}q^{k-1}\]
copies of $K^k_{k+1}$.
\end{lemma}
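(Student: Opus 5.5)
Since every block $B\in\boldsymbol{\mathcal{B}}$ spans a copy of $K^k_{k+1}$ and distinct blocks give distinct vertex sets, it suffices to show $|\boldsymbol{\mathcal{B}}|\geq\frac{p^kq^{k-1}}{(k+1)!}-p^{k-1}q^{k-1}$. Count ordered $(k+1)$-tuples $(x_1,\dots,x_{k+1})$ of distinct elements of $\Gamma$ with $\sigma=0$, $\phi=1$ and $\phi(L)\neq1$ for all $(k-2)$-subsets $L$; dividing by $(k+1)!$ gives $|\boldsymbol{\mathcal{B}}|$. The plan is to count all tuples $(x_1,\dots,x_k)\in\Gamma^k$ and set $x_{k+1}=x_1+\dots+x_k$, which lies in $\Gamma$ since $\Gamma$ is a group. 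Then $\sigma=0$ holds automatically, and by \eqref{eq:phi_union} the condition $\phi(\{x_1,\dots,x_{k+1}\})=1$ becomes a single polynomial equation in the first $k$ coordinates.

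\textbf{Main count.} Split $X=\{x_1,\dots,x_{k-1}\}$ and $x_k$. By \eqref{eq:phi_union}, $\phi$ of the full set equals $\phi(X)+\langle\sigma(X),x_k\rangle+\langle\sigma(X)+x_k,x_{k+1}\rangle$ with $x_{k+1}=\sigma(X)+x_k$. Since $\langle y,y\rangle=0$, the last term vanishes, so the condition reads $\phi(X)+\langle\sigma(X),x_k\rangle=1$. For fixed $X$ with $s=\sigma(X)=(s_1,s_2)$, the map $x_k\mapsto\langle s,x_k\rangle=s_1u_2+s_2u_1$ on $\Gamma=A\times\mathbb{F}_q$ is affine in $u_2\in\mathbb{F}_q$ whenever $s_1\neq0$. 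In that case it is a nonzero linear map in the free coordinate, so for each $u_1\in A$ there is exactly one $u_2$. This gives exactly $p$ choices of $x_k$. The number of tuples $X\in\Gamma^{k-1}$ with $s_1=0$ is $|\Gamma|^{k-1}/p=p^{k-2}q^{k-1}$, since the first coordinates range over $A^{k-1}$ and the sum map $A^{k-1}\to A$ is uniform. So the number of good $k$-tuples is at least $(p^{k-1}q^{k-1}-p^{k-2}q^{k-1})\cdot p=p^kq^{k-1}-p^{k-1}q^{k-1}$.

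\textbf{Degenerate tuples.} We must subtract tuples in which the $k+1$ points are not distinct, or some $(k-2)$-subset $L$ has $\phi(L)=1$. The bound $p^{k-1}q^{k-1}(k+1)!$ is all we are allowed to lose, so each bad event should cost $O(p^{k-1}q^{k-1})$ ordered tuples, with constants summing to at most $(k+1)!$ minus what was already used. Two tools handle this:
\begin{itemize}
\item[(i)] If $x_i=x_j$, we lose a factor $|\Gamma|=pq$ in freedom. At most $\binom{k+1}{2}$ such coincidences contribute $O(p^{k-1}q^{k-2}\cdot p)$, which is smaller than $p^{k-1}q^{k-1}$ since $p\leq q$.
\item[(ii)] For $\phi(L)=1$, choose the ordering so that $L$ contains some variable $x_j$ with $j\leq k-1$ while the remaining variables are chosen last. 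Fixing the other elements of $L$, the map $x_j\mapsto\phi(L)$ is affine with linear part $\langle\sigma(L\setminus\{x_j\}),x_j\rangle$, so by the same fiber argument it loses a factor $q$ unless $\sigma(L\setminus x_j)$ has zero first coordinate, which is itself a rare event. For $k=2,3$ the $L$-condition is vacuous or only about $\phi(\emptyset)=0$ or $\phi(\{x\})=0$, both $\neq1$, so only (i) matters.
\end{itemize}

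\textbf{Main obstacle.} The hard step is bookkeeping the error constant so that the total loss, after dividing by $(k+1)!$, is at most $p^{k-1}q^{k-1}$. The main term already loses $p^{k-1}q^{k-1}$ before division. I would try to bound each degenerate class by $p^{k-1}q^{k-2}$ times a combinatorial factor such as $\binom{k+1}{2}+\binom{k+1}{k-2}$, then absorb it using $(k+1)!-1\geq$ those factors. If $p<q$ fails to give slack, I would sharpen by counting exactly via the uniform-fiber fact instead of crude union bounds.
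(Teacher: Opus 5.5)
Your main count is correct and takes a different route from the paper. The paper fixes all first coordinates $(a_1,\dots,a_{k+1})\in A^{k+1}$ with zero sum and not all equal, then uses that $\phi=\sum_i a_ib_i$ is a non-zero linear form on $\{\sum_i b_i=0\}$; this gives $(p^k-p)q^{k-1}$ tuples. You instead solve for $x_k$ through $\langle\sigma(X),x_k\rangle$, which gives the weaker but still admissible $p^kq^{k-1}-p^{k-1}q^{k-1}$. For $k\in\{2,3\}$ you are done once (i) is made rigorous with the crude bound $\binom{k+1}{2}(pq)^{k-1}$: choose the coinciding pair, their common value and $k-2$ further entries, and the zero sum fixes the last one. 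This suffices since $1+\binom{k+1}{2}\le(k+1)!$.

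The gap is the third condition for $k\ge4$. You never actually bound the tuples with some $\phi(x_L)=1$, and the target you set for this class, $O(p^{k-1}q^{k-2})$, is false. Take $L=\{1,\dots,k-2\}$ and $s=\sigma(x_1,\dots,x_{k-2})$. About $p^{k-2}q^{k-3}$ ordered $L$-tuples have $\phi=1$. The remaining entries must then satisfy $x_{k-1}+x_k+x_{k+1}=s$ and $\langle x_{k-1}+s,x_k+s\rangle=0$ (cf.\ \eqref{eq:breaking_up}). Every $x_{k-1}$ with $(x_{k-1}+s)_1\neq0$ leaves exactly $p$ choices of $x_k$. So this class has about $p^kq^{k-2}$ members, which is of order $p^{k-1}q^{k-1}$ when $p=q$. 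Your own heuristic in (ii), ``loses a factor $q$'', already points to this order.

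What you need instead is a bound $C\,p^{k-1}q^{k-1}$ per index set with $1+\binom{k+1}{2}+C\binom{k+1}{3}\le(k+1)!$. Counting the triple case by case gives at most $(p-1)pq+(q-1)q+pq\le p^2q+q^2$ completions. The $q^2$ term comes from $x_{k-1}+s\in\{0\}\times\mathbb{F}_q$, which forces $x_k+s\in\{0\}\times\mathbb{F}_q$. Hence $C=\tfrac32$ works, provided the number of ordered $L$-tuples with $\phi=1$ is at most $p^{k-2}q^{k-3}$.

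Do not derive that count from your sequential fiber argument. When $\sigma(L\setminus\{x_j\})$ has zero first coordinate, that argument allows $q$ values of $x_j$, ignoring that the forced first coordinate must lie in $A$. The resulting extra term $p^{k-4}q^{k-2}$, multiplied by the $q^2$ completions, exceeds $p^{k-1}q^{k-1}$ once $q\gg p^3$, and the lemma is still non-trivial there if $p>(k+1)!$. Instead, fix the first coordinates of $L$ first. Then $\phi(L)=\sum_j b_j\sum_{i\neq j}a_i$ is linear in the second coordinates and has at most $q^{k-3}$ solutions. This is exactly what the paper does; it then bounds the completions by $\tfrac32pq^2$.
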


\begin{proof}[Proof of~\Cref{lem:counting_blocks}]
Since every block in $\boldsymbol{\mathcal{B}}$ corresponds to a unique copy of $K^k_{k+1}$ in $\mathcal{H}_{p,q}$, it suffices to show that
\[\lvert\boldsymbol{\mathcal{B}}\rvert\geq\frac{p^kq^{k-1}}{(k+1)!}-p^{k-1}q^{k-1}.\]
We first count the ordered $(k+1)$-tuples $(x_1,\dots,x_{k+1})\in\Gamma^{k+1}$, such that the multiset $\{x_1,\dots,x_{k+1}\}$ satisfies the first two conditions in~\eqref{eq:block_conditions}. Writing $x_i=(a_i,b_i)\in\Gamma$, the first condition states that
\begin{equation}
\label{eq:first_condition}
\sum_{i=1}^{k+1}a_i=0\qquad\text{and}\qquad\sum_{i=1}^{k+1}b_i=0.
\end{equation}
Since $\Gamma$ has exponent two, after~\eqref{eq:first_condition} is satisfied, the second condition is equivalent to
\begin{equation}
\label{eq:second_condition}
\phi(\{x_1,\dots,x_{k+1}\})=\sum_{i=1}^{k+1}a_ib_i=1.
\end{equation}
Indeed, after expanding the left-hand side of~\eqref{eq:second_condition}, the coefficient of $a_i$ is $\sum_{j\neq i}b_j=0-b_i=b_i$.

There are at least $p^k-p$ ordered tuples $(a_1,\dots,a_{k+1})\in A^{k+1}$ such that $\sum_{i=1}^{k+1}a_i=0$ and $a_i\neq a_j$ for some $i<j$. For any such tuple, we consider the linear map
\[f:\,(b_1,\ldots,b_{k+1})\longmapsto\sum_{i=1}^{k+1}a_ib_i\]
from the $k$-dimensional subspace
\[\left\{(b_1,\dots,b_{k+1})\subseteq\mathbb{F}_q^{k+1}:\,\sum_{i=1}^{k+1}b_i=0\right\}\subseteq\mathbb{F}_q^{k+1}\]
to $\mathbb{F}_q$. Since $a_1,\dots,a_{k+1}$ are not all equal, $f$ is non-zero. Then there are exactly
$q^{k-1}$ solutions for $f(b_1,\dots,b_{k+1})=1$. The number of $(x_1,\dots,x_{k+1})\in\Gamma^{k+1}$ satisfying the first two conditions in~\eqref{eq:block_conditions}, or equivalently~\eqref{eq:first_condition} and~\eqref{eq:second_condition}, is therefore at least
\begin{equation}
\label{good_tuples}
(p^k-p)q^{k-1}=p^kq^{k-1}-pq^{k-1}.
\end{equation}
Moreover, the number of these tuples with repeated entries is at most
\begin{equation}
\label{bad_tuples}
\binom{k+1}{2}|\Gamma|^{k-1}=\binom{k+1}{2}p^{k-1}q^{k-1}.
\end{equation}
Indeed, every such tuple can be counted by choosing a pair of coordinates, specifying their common value, and specifying $k-2$ of the remaining $k-1$ entries. The zero-sum condition~\eqref{eq:first_condition} then uniquely determines the final entry.

By~\eqref{good_tuples} and~\eqref{bad_tuples} there are at least
\[p^kq^{k-1}-(k+1)kp^{k-1}q^{k-1}\]
ordered tuples $(x_1,\dots,x_{k+1})\in\Gamma^{k+1}$ with pairwise distinct entries satisfying the first two conditions in~\eqref{eq:block_conditions}. Dividing by the $(k+1)!$ possible orderings, we obtain at least
\[\frac{p^kq^{k-1}}{(k+1)!}-\frac{p^{k-1}q^{k-1}}{(k-1)!}\]
$B\in\binom{\Gamma}{k+1}$ satisfying the first two conditions in~\eqref{eq:block_conditions}.

It remains to show that the number of $B\in\binom{\Gamma}{k+1}$ satisfying the first two conditions but violating the last one is at most
\[\left(1-\frac{1}{(k-1)!}\right)p^{k-1}q^{k-1}.\]
Since the last condition in~\eqref{eq:block_conditions} is automatically satisfied when $2\leq k\leq3$, we assume that $k\geq4$. We shall first count the number of $L\in\binom{\Gamma}{k-2}$ with $\phi(L)=1$, and then bound the number of $B$ containing a fixed such $L$. We count $L=\{(a_1,b_1),\dots,(a_{k-2},b_{k-2})\}$ by first choosing $a_1,\dots,a_{k-2}$ arbitrarily and then determining $b_1,\dots,b_{k-2}$. After fixing $a_1,\dots,a_{k-2}$, $\phi(L)$ can be considered as a linear map from $\mathbb{F}_q^{k-2}$ to $\mathbb{F}_q$. Consequently, there are at most $q^{k-3}$ solutions $(b_1,\dots,b_{k-2})\in\mathbb{F}_q^{k-2}$ for $\phi(L)=1$. Then the number of $L\in\binom{\Gamma}{k-2}$ with $\phi(L)=1$, after dividing by the $(k-2)!$ possible orderings of $a_1,\dots,a_{k-2}$, is at most
\[\frac{p^{k-2}q^{k-3}}{(k-2)!}.\]
For any $L\in\binom{\Gamma}{k-2}$ with $\phi(L)=1$, we bound the number of $B$ containing $L$ by counting $\{x,y,z\}\subseteq\Gamma\setminus L$ such that $L\cup\{x,y,z\}$ satisfies the first two conditions in~\eqref{eq:block_conditions}. Let $\sigma(L)=s=(s_a,s_b)$ and recall that the group $\Gamma$ has exponent two. The first condition $\sigma(L\cup\{x,y,z\})=0$ yields that
\begin{equation}
\label{eq:first_restriction}
s=x+y+z
\end{equation}
and thus $\langle{s,x+y+z}\rangle=0$. Moreover, the second condition $\phi(L\cup\{x,y,z\})=1$, together with~\eqref{eq:phi_union}, implies that
\begin{equation}
\label{eq:second_restriction}
0=\phi(\{x,y,z\})+\langle{s,x+y+z}\rangle=\phi(\{x,y,z\})=\langle{x+s,y+s}\rangle,
\end{equation}
where the last equality follows from the same transformation used in~\eqref{eq:breaking_up}. Then every such $\{x,y,z\}$ can be counted by first choosing $x=(x_a,x_b)$ arbitrarily and then determining $y=(y_a,y_b)$ and $z$. If $x=s$, then from~\eqref{eq:first_restriction} it holds $y=z$, implying that there are at most $|\Gamma|=pq$ choices for $y$ and $z$. If $x\neq s$, then either $x_a\neq s_a$ or $x_b\neq s_b$. In the case $x_a\neq s_a$, we choose $y_a$ arbitrarily and $y_b$ is then uniquely determined by~\eqref{eq:second_restriction}, and, once $x$ and $y$ are fixed,~\eqref{eq:first_restriction} uniquely determines $z$. The case $x_b\neq s_b$ follows similarly. Accordingly, for each fixed $L$, the number of such $\{x,y,z\}$ is at most
\[pq+(|\Gamma|-1)q\leq\frac{3pq^2}{2}.\]
Therefore, the number of $B\in\binom{\Gamma}{k+1}$ satisfying the first two conditions but violating the last one is at most
\[\frac{p^{k-2}q^{k-3}}{(k-2)!}\cdot\frac{3pq^2}{2}=\frac{3p^{k-1}q^{k-1}}{2(k-2)!}\leq\left(1-\frac{1}{(k-1)!}\right)p^{k-1}q^{k-1},\]
as desired. This completes the proof of~\Cref{lem:counting_blocks}.
\end{proof}

\subsection{Proof of~\Cref{thm:C4}}
\label{sec:proof_thm}
Let $k\geq2$ be an integer and let $n=2^m\geq4$ be a power of two.

Let $\mathcal{H}_{p,q}$ be the $k$-uniform hypergraph defined in~\Cref{sec:construction}, where $p=2^{\lfloor{m/2}\rfloor}$ and $q=2^{\lceil{m/2}\rceil}$. Note that $\mathcal{H}_{p,q}$ has precisely $pq=n$ vertices and by~\Cref{lem:block_C4} is $\cS^kC_4$-free. Moreover, by~\Cref{lem:counting_blocks} the hypergraph $\mathcal{H}_{p,q}$ contains at least
\[\frac{p^kq^{k-1}}{(k+1)!}-p^{k-1}q^{k-1}=\frac{n^kq^{-1}}{(k+1)!}-n^{k-1}\]
copies of $K^k_{k+1}$. Therefore, we have
\[\ex(n,K^k_{k+1},\cS^kC_4)\geq\frac{n^{k-1/2}}{(k+1)!}-n^{k-1}\]
when $m$ is even, in which case $q=\sqrt{n}$, and
\[\ex(n,K^k_{k+1},\cS^kC_4)\geq\frac{n^{k-1/2}}{\sqrt{2}(k+1)!}-n^{k-1}\]
when $m$ is odd, in which case $q=\sqrt{2n}$.\qed

\subsection{Proof of~\Cref{cor:C4}}
\label{sec:proof_cor}
Let $k\geq2$ be a fixed integer. Let $n\in\mathbb{N}$ be sufficiently large. As the upper bound follows readily from~\eqref{upper_bound}, it suffices to establish the lower bound.

Let $m=\lfloor\log_2{n}\rfloor\geq2$ and
\[\lambda_m=\begin{cases}
    1 & \text{if $m$ is even,}\\
    \sqrt{2} & \text{if $m$ is odd.}\end{cases}\]
Let $1\leq x<2$ be such that $n=x2^m$. Then by~\Cref{thm:C4} we have the first lower bound
\begin{equation}
\label{eq:first_lower_bound}
\begin{aligned}
\ex(n,K^k_{k+1},\cS^kC_4)\geq\ex(2^m,K^k_{k+1},\cS^kC_4)&\geq\frac{(2^m)^{k-1/2}}{\lambda_m(k+1)!}-(2^m)^{k-1}\\
&=\frac{1}{\lambda_mx^{k-1/2}}\cdot\frac{n^{k-1/2}}{(k+1)!}-O(n^{k-1}).
\end{aligned}
\end{equation}

For a second lower bound, let $\mathcal{H}$ be an $\cS^kC_4$-free $k$-uniform hypergraph on $2^{m+1}$ vertices attaining $\ex(2^{m+1},K^k_{k+1},\cS^kC_4)$. Let $\mathcal{G}$ be an induced subhypergraph of $\mathcal{H}$ on $n$ vertices chosen uniformly at random. A fixed copy of $K^k_{k+1}$ survives in $\mathcal{G}$ with probability
\[\frac{\binom{2^{m+1}-(k+1)}{n-(k+1)}}{\binom{2^{m+1}}{n}}=\frac{\binom{n}{k+1}}{\binom{2^{m+1}}{k+1}}\geq\left(\frac{x}{2}\right)^{k+1}-O(n^{-1}).\]
Let $X$ be the expected number of copies of $K^k_{k+1}$ in $\mathcal{G}$. Then
\begin{equation}
\label{eq:second_lower_bound}
\begin{aligned}
X&\geq\ex(2^{m+1},K^k_{k+1},\cS^kC_4)\cdot\left(\left(\frac{x}{2}\right)^{k+1}-O(n^{-1})\right)\\
&\geq\frac{(2^{m+1})^{k-1/2}}{\lambda_{m+1}(k+1)!}\cdot\left(\frac{x}{2}\right)^{k+1}-O(n^{k-1})\\
&=\frac{x^{3/2}}{2^{3/2}\lambda_{m+1}}\cdot\frac{n^{k-1/2}}{(k+1)!}-O(n^{k-1}).
\end{aligned}
\end{equation}
Namely, there exists an $n$-vertex induced subhypergraph $\mathcal{G}$ of $\mathcal{H}$ with at least $X$ copies of $K^k_{k+1}$. As $\mathcal{G}$ remains $\cS^kC_4$-free,~\eqref{eq:second_lower_bound} gives a second lower bound on $\ex(n,K^k_{k+1},\cS^kC_4)$. Combining~\eqref{eq:first_lower_bound} and~\eqref{eq:second_lower_bound}, it remains to show that
\[\min\left\{\lambda_mx^{k-1/2},2^{3/2}\lambda_{m+1}x^{-3/2}\right\}\leq2^{2-3/(k+1)}.\]
If $m$ is even, the two terms in the minimum are $x^{k-1/2}$ and $4x^{-3/2}$. Since $x^{k-1/2}\leq2^{2-3/(k+1)}$ when $x\leq2^{2/(k+1)}$ and $4x^{-3/2}\leq2^{2-3/(k+1)}$ when $x\geq2^{2/(k+1)}$, the minimum is at most $2^{2-3/(k+1)}$.
If $m$ is odd, the two terms are $\sqrt{2}x^{k-1/2}$ and $2^{3/2}x^{-3/2}$. Similarly, we have $\sqrt{2}x^{k-1/2}\leq2^{2-3/(k+1)}$ when $x\leq2^{2/(k+1)-1/3}$ and $2^{3/2}x^{-3/2}\leq2^{2-3/(k+1)}$ when $x\geq2^{2/(k+1)-1/3}$. This completes the proof.

\section{Counting cliques in $\cS^kC_\ell$-free hypergraphs}
\label{sec:C_ell}
In this section, we prove~\Cref{prop:longer_cycles}, giving lower and upper bounds on $\ex(n,K^k_t,\cS^kC_\ell)$. The lower bound is obtained using probabilistic methods. Rather than following the standard approach of choosing edges at random and subsequently eliminating bad configurations, our proof draws on the block-packing construction from~\Cref{sec:construction}: we choose blocks at random and then eliminate bad configurations.

\begin{proof}[Proof of~\Cref{prop:longer_cycles}]
Let $k\geq2$ and $\ell\geq3$ be fixed integers. Let $t$ be an arbitrary integer with $k\leq t<k+\ell-2$. We first establish the lower bound on $\ex(n,K^k_t,\cS^kC_\ell)$. Set
\[\alpha=k-1+\frac{1}{\ell-1}\qquad\text{and}\qquad p=\eps n^{\alpha-t}\in(0,1),
\]
where $\eps>0$ is a sufficiently small constant. Let $\mathcal{B}$ be a
random family obtained by including every member of $\binom{[n]}{t}$
independently with probability $p$. Then
\begin{equation}
\label{eq:blocks}
\mathbb{E}(|\mathcal{B}|)=\binom{n}{t}p=\left(\frac{\eps}{t!}+o(1)\right)n^\alpha.
\end{equation}
Define the $k$-uniform hypergraph
$H_{\mathcal{B}}$ by
\[E(H_{\mathcal{B}})=\bigcup_{B\in\mathcal{B}}\binom{B}{k}.\]
Thus, every $B\in\mathcal{B}$ gives a copy of $K^k_t$ in $H_{\mathcal{B}}$.

A bad configuration consists of an ordered tuple of blocks $(B_1,\dots,B_r)\in\mathcal{B}^r$, where $2\leq r\leq\ell$, and an ordered tuple $(E_1,\dots,E_r)$, where $\emptyset\neq E_i\subseteq\binom{B_i}{k}$ for every $i\in[r]$ and $E_1\dot\cup\dots\dot\cup E_r$ gives a copy of $E(\cS^kC_\ell)$. Observe that each copy of $\cS^kC_\ell$ in $H_\mathcal{B}$ is supported by at least one bad configuration. For $2\leq r\leq\ell$, let $X_r$ denote the number of bad configurations $\{(B_1,\dots,B_r),(E_1,\dots,E_r)\}$.

We shall bound $X_r$ by first choosing $(E_1,\dots,E_r)$ and then choosing $(B_1,\dots,B_r)$. As $\cS^kC_\ell$ has $k+\ell-2$ vertices, there are at most $n^{k+\ell-2}$ copies $F$ of $\cS^kC_\ell$ in $H_\mathcal{B}$. Since the number of ordered partitions of $E(F)$ into $r$ parts is upper bounded by a constant, there are $O(n^{k+\ell-2})$ choices of $(E_1,\dots,E_r)$. Now, assume that $(E_1,\dots,E_r)$ is fixed and we bound the number of $(B_1,\dots,B_r)$ such that $\{(B_1,\dots,B_r),(E_1,\dots,E_r)\}$ forms a bad configuration. For every $i\in[r]$, since $\emptyset\neq E_i\in\binom{B_i}{k}$, at least $k+|E_i|-1$ vertices of $B_i$ are fixed. Consequently, there are at most $n^{t-(k+|E_i|-1)}$ choices for $B_i$. The number of $(B_1,\dots,B_r)$ is then at most
\[\prod_{i=1}^{r}n^{t-(k+|E_i|-1)}=n^{r(t-k+1)-\sum_{i=1}^{r}|E_r|}=n^{r(t-k+1)-\ell},\]
which yields that $X_r=O(n^{k-2+r(t-k+1)})$. Let $X=\sum_{r=2}^{\ell}X_r$ be the number of all bad configurations. Then
\begin{equation}
\label{eq:X}
\mathbb{E}(X)=\sum_{r=2}^\ell\mathbb{E}(X_r)=\sum_{r=2}^{\ell}O(n^{k-2+r(t-k+1)})p^r=\sum_{r=2}^{\ell}O(\eps^r n^{k-2+r/(\ell-1)})=O(\eps^\ell n^\alpha).
\end{equation}

Since $\eps>0$ is a sufficiently small constant, combining~\eqref{eq:blocks} and~\eqref{eq:X} we have
\[\mathbb{E}(|\mathcal{B}|-X)=\mathbb{E}(|\mathcal{B}|)-\mathbb{E}(X)=\Omega(n^\alpha).\]
Then there exists some $H_\mathcal{B}$ for which $|\mathcal{B}|-X=\Omega(n^\alpha)$. Delete a block from each bad configuration in $H_\mathcal{B}$ and let $H$ be the resulting hypergraph. Then $H$ is $\cS^kC_\ell$-free and contains at least $|\mathcal{B}|-X=\Omega(n^\alpha)$ copies of $K^k_t$. This yields the lower bound
\[\ex(n,K^k_t,\cS^kC_\ell)=\Omega(n^{k-1+1/(\ell-1)}).\]

For the upper bounds, we first claim that, for every $2\leq r<\ell$, a $C_\ell$-free graph $G$ contains at most $(\ell-3)^{r-2}|E(G)|$ copies of $K_r$. Indeed, when $r=2$ this is trivial. Assume that $r>2$. For any vertex $v\in V(G)$, let $G_v$ denote the subgraph of $G$ induced by the neighborhood of $v$. Since $G$ is $C_\ell$-free, $G_v$ contains no path on $\ell-1$ vertices. Then $G_v$ must be $(\ell-3)$-degenerate. Equivalently, the vertices of $G_v$ can be written in an order, such that every vertex has at most $\ell-3$ neighbors among its preceding vertices. It then follows that $G_v$ contains at most $|V(G_v)|(\ell-3)^{r-2}$ copies of $K_{r-1}$. As each copy of $K_r$ in $G$ corresponds a copy of $K_{r-1}$ in exactly $r$ distinct $G_v$, there are at most
\[\frac{\sum_{v\in V(G)}|V(G_v)|(\ell-3)^{r-2}}{r}\leq(\ell-3)^{r-2}|E(G)|\]
copies of $K_r$ in $G$.

Now let $H$ be an $n$-vertex $\cS^kC_\ell$-free $k$-uniform hypergraph. For every $L\in\binom{V(H)}{k-2}$, let $G_L$ be the link graph on $V(H)$, where $\{x,y\}\in E(G_L)$ if and only if
$L\cup\{x,y\}\in E(H)$. Then, every
$G_L$ is $C_\ell$-free, and every copy of $K^k_t$ in $H$ corresponds to a copy of $K_{t-k+2}$ in exactly $\binom{t}{k-2}$ distinct $G_L$. Recall that from our claim, $G_L$ contains at most $(\ell-3)^{t-k}|E(G_L)|$ copies of $K_{t-k+2}$. Moreover, as $G_L$ is $C_\ell$-free, from~\eqref{C6C10} we have $|E(G_L)|=O(n^2)$ if $\ell$ is odd and $|E(G)|=O(n^{1+2/\ell})$ if $\ell$ is even. Consequently, $H$ contains at most
\[\frac{\sum_{L\in\binom{V(H)}{k-2}}(\ell-3)^{t-k}|E(G_L)|}{\binom{t}{k-2}}=\begin{cases}
O(n^k) & \text{if $\ell$ is odd,}\\
O(n^{k-1+2/\ell}) & \text{if $\ell$ is even.}
\end{cases}\]
copies of $K^k_t$. This gives the desired upper bound on $\ex(n,K^k_t,\cS^kC_\ell)$, completing the proof.
\end{proof}

\section{Tur\'an numbers of $\cS^kC_6$ and $\cS^kC_{10}$}
\label{sec:C6C10}
In this section we prove~\Cref{thm:C6C10} by developing $k$-uniform analogues of Wenger's constructions.

\subsection{Uniform lifts of Wenger's constructions}
Given an integer $\ell\geq2$ and a prime power $q$, the Wenger graph $W_{\ell,q}$ is the bipartite graph whose two vertex classes are copies of $\mathbb{F}_q^\ell$. For two vertices $u=(u_1,\dots,u_\ell)$ and $v=(v_1,\dots,v_\ell)$ from different classes, join $u$ to $v$ in $W_{\ell,q}$ if and only if
\[u_d+v_d=u_1v_1^{d-1}\qquad\text{for all $d=2,\dots,\ell$.}\]
Wenger~\cite{W91} proved that $W_{\ell,q}$ is $C_{2\ell}$-free for $\ell\in\{2,3,5\}$. Below we introduce the $k$-uniform version of Wenger graphs.

\begin{definition}
Let $k,\ell\geq2$ be integers and let $q$ be a prime power. For each $i\in[k]$ let $V_i=\{i\}\times\mathbb{F}_q^\ell$.
Define $W^k_{\ell,q}$ to be the $k$-uniform hypergraph on
$V_1\dot\cup\dots\dot\cup V_k$, where, writing $x_i=(x_{i,1},\dots,x_{i,\ell})\in\mathbb{F}_q^\ell$, 
\[\{(1,x_1),\dots,(k,x_k)\}\in E(W^k_{\ell,q})\]
if and only if
\begin{equation}
\label{eq:k_uniform_Wenger}
\sum_{i=1}^k x_{i,d}=\sum_{1\leq i<j\leq k}x_{i,1}x_{j,1}^{d-1}\qquad\text{for all $d=2,\dots,\ell$.}
\end{equation}
\end{definition}

Note that $W^k_{\ell,q}$ is $k$-partite by definition, and $W^2_{\ell,q}=W_{\ell,q}$. Shortly after completing this work, we learned that $W^k_{\ell,q}$ also belongs to the family of algebraically defined hypergraphs introduced by Lazebnik and Mubayi~\cite{LM02}. The number of edges in these hypergraphs was determined in~\cite{LM02}. For completeness, we give the corresponding edge count for $W^k_{\ell,q}$ here.

\begin{lemma}
\label{lem:Wenger_edges}
$|E(W^k_{\ell,q})|=q^{k\ell-\ell+1}$.
\end{lemma}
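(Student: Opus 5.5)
We count edges by showing that, once the first $k-1$ vertices $x_1,\dots,x_{k-1}$ (each in $\mathbb{F}_q^\ell$) and the first coordinate $x_{k,1}$ of the last vertex are fixed, the remaining coordinates $x_{k,2},\dots,x_{k,\ell}$ are uniquely determined by~\eqref{eq:k_uniform_Wenger}. An edge of $W^k_{\ell,q}$ is an ordered $k$-tuple $((1,x_1),\dots,(k,x_k))$, one vertex from each class $V_i$; since the classes are disjoint and labelled, edges correspond bijectively to $k$-tuples $(x_1,\dots,x_k)\in(\mathbb{F}_q^\ell)^k$ satisfying~\eqref{eq:k_uniform_Wenger}. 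No division by orderings is needed.

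The key step is to rewrite each equation of~\eqref{eq:k_uniform_Wenger} for a fixed $d\in\{2,\dots,\ell\}$ as
\[
x_{k,d}=\sum_{1\leq i<j\leq k}x_{i,1}x_{j,1}^{d-1}-\sum_{i=1}^{k-1}x_{i,d}.
\]
The right-hand side involves only the coordinates $x_{i,\cdot}$ for $i\leq k-1$ together with $x_{k,1}$, which enters through the terms $j=k$. Note that $x_{k,d}$ for $d\geq2$ never appears on the right, because the quadratic side only uses first coordinates. So for each choice of $(x_1,\dots,x_{k-1},x_{k,1})$ there is exactly one choice of $(x_{k,2},\dots,x_{k,\ell})$ giving an edge.

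The free parameters are $x_1,\dots,x_{k-1}\in\mathbb{F}_q^\ell$, giving $q^{(k-1)\ell}$ choices, and $x_{k,1}\in\mathbb{F}_q$, giving $q$ choices. Hence
\[
|E(W^k_{\ell,q})|=q^{(k-1)\ell}\cdot q=q^{k\ell-\ell+1}.
\]
There is no real obstacle. The only point needing care is that the solution map is a bijection: different $k$-tuples give different edges because the classes are labelled, and the $k$ vertices are automatically distinct since they lie in disjoint classes.
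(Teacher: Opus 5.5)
Your proof is correct and is essentially the paper's argument. The paper fixes all first coordinates $x_{1,1},\dots,x_{k,1}$ and notes that, for each $d\in\{2,\dots,\ell\}$, the condition is a single affine equation in $x_{1,d},\dots,x_{k,d}$ with $q^{k-1}$ solutions; this is exactly your step of choosing $x_{1,d},\dots,x_{k-1,d}$ freely and solving for $x_{k,d}$.
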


\begin{proof}[Proof of~\Cref{lem:Wenger_edges}]
We shall count the number of $\{(1,x_1),\dots,(k,x_k)\}$ satisfying~\eqref{eq:k_uniform_Wenger}. Let us start by choosing the first coordinates $x_{1,1},\dots,x_{k,1}$ arbitrarily. After the first coordinates are fixed, for each $d\in\{2,\dots,\ell\}$,~\eqref{eq:k_uniform_Wenger} is a linear equation in
$x_{1,d},\ldots,x_{k,d}$ and has exactly $q^{k-1}$ solutions. Thus,
\[|E(W^k_{\ell,q})|=q^k\cdot(q^{k-1})^{\ell-1}=q^{k\ell-\ell+1}\]
as claimed.
\end{proof}

Next we prove our key lemma, asserting that $W^k_{\ell,q}$ is $\cS^kC_{2\ell}$-free. This statement does not follow from previous work and constitutes the main difficulty in lifting Wenger graphs to the $k$-uniform setting. We resolve this issue using a coordinate-changing trick.

\begin{lemma}
\label{lem:Wenger_C6C10}
$W^k_{\ell,q}$ is $\cS^kC_{2\ell}$-free for $\ell\in\{2,3,5\}$.
\end{lemma}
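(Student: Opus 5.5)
The plan is to reduce to the graph case through link graphs, as in the proof of \Cref{lem:block_C4}: fix $L\in\binom{V(W^k_{\ell,q})}{k-2}$, let $G_L$ be the link graph of $L$, and show that $G_L$ is $C_{2\ell}$-free. It suffices to show that $G_L$ is isomorphic to a subgraph of the Wenger graph $W_{\ell,q}$ and then invoke Wenger's theorem~\cite{W91}, which says $W_{\ell,q}$ is $C_{2\ell}$-free for $\ell\in\{2,3,5\}$.

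The first step uses $k$-partiteness. Every edge meets each $V_m$ in exactly one vertex. So if $L\cup\{x,y\}$ is an edge, then $L$ meets $k-2$ distinct parts, and $x,y$ lie in the two remaining parts, say $V_i$ and $V_j$ with $i<j$, one in each. If $L$ has two vertices in the same part, then $G_L$ is empty. Otherwise $G_L$ is a bipartite graph between $V_i$ and $V_j$, where $i$ and $j$ are determined by $L$. Hence any copy of $C_{2\ell}$ in $G_L$ alternates between $V_i$ and $V_j$.

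The second step is the coordinate-changing trick. Write $L=\{(m,y_m):m\in[k]\setminus\{i,j\}\}$ and take $x=(i,u)$, $y=(j,v)$. For each $d\in\{2,\dots,\ell\}$, split the right-hand side of \eqref{eq:k_uniform_Wenger} according to which pairs of indices are involved.
\begin{itemize}
\item[(a)] The pair $(i,j)$ contributes $u_1v_1^{d-1}$, since $i<j$.
\item[(b)] Pairs $\{i,m\}$ with $m\in L$ contribute $u_1\sum_{m>i}y_{m,1}^{d-1}+u_1^{d-1}\sum_{m<i}y_{m,1}$. This is a function $f_d(u_1)$ of $u_1$ alone, whose coefficients depend only on $L$.
\item[(c)] Pairs $\{j,m\}$ with $m\in L$ similarly give a function $g_d(v_1)$.
\item[(d)] Pairs inside $L$, together with $\sum_{m}y_{m,d}$, give a constant $c_d$.
\end{itemize}
Hence $\{x,y\}\in E(G_L)$ if and only if
\[\bigl(u_d-f_d(u_1)-c_d\bigr)+\bigl(v_d-g_d(v_1)\bigr)=u_1v_1^{d-1}\qquad\text{for all }d=2,\dots,\ell.\]
Now define $\psi_i(u)=(u_1,\,u_2-f_2(u_1)-c_2,\,\dots,\,u_\ell-f_\ell(u_1)-c_\ell)$ and $\psi_j(v)=(v_1,\,v_2-g_2(v_1),\,\dots,\,v_\ell-g_\ell(v_1))$. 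Both maps are bijections of $\mathbb{F}_q^\ell$, since each is a triangular shift that leaves the first coordinate fixed. Under these maps, $G_L$ is carried exactly onto $W_{\ell,q}$, with $V_i$ playing the role of the class of $u$ in Wenger's adjacency rule. This holds in every characteristic, since we only subtract.

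Combining the two steps, every nonempty $G_L$ is isomorphic to $W_{\ell,q}$, and is therefore $C_{2\ell}$-free for $\ell\in\{2,3,5\}$. A copy of $\cS^kC_{2\ell}$ with apex set $L$ would be a $C_{2\ell}$ in $G_L$, so $W^k_{\ell,q}$ is $\cS^kC_{2\ell}$-free.

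I expect the main obstacle to be the bookkeeping in (b) and (c). The mixed terms $y_{m,1}u_1^{d-1}$ and $u_1y_{m,1}^{d-1}$ are nonlinear in $u_1$, and their form depends on whether $m<i$ or $m>i$. The key observation is that each such term is a function of the single first coordinate $u_1$, so it can be absorbed into the $d$-th coordinate without breaking bijectivity. By contrast, the only term that genuinely couples $u$ and $v$ is exactly the Wenger term $u_1v_1^{d-1}$, because $i<j$ is fixed by $L$.
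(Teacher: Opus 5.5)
Your proposal is correct and follows essentially the same route as the paper's proof. You restrict to the link graph on the two classes missed by $L$, absorb the $L$-dependent terms $f_d(u_1)$, $g_d(v_1)$ and $c_d$ by triangular shifts that fix the first coordinate, and identify the result with $W_{\ell,q}$. The only differences are cosmetic: you place $c_d$ on the $u$-side, whereas the paper places it on the $v$-side; and where you speak of $G_L$ being isomorphic to $W_{\ell,q}$, the paper states this for the induced subgraph on $V_a\cup V_b$, since $G_L$ also contains isolated vertices.
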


\begin{proof}[Proof of~\Cref{lem:Wenger_C6C10}]
For any $L\in\binom{V(W^k_{\ell,q})}{k-2}$, let $G_L$ denote the link graph on $V(W^k_{\ell,q})$, where $\{u,v\}\in E(G_L)$ if and only if $L\cup\{u,v\}\in E(W^k_{\ell,q})$. To show that $W^k_{\ell,q}$ contains no copy of $\cS^kC_{2\ell}$, it suffices to prove that $G_L$ is $C_{2\ell}$-free for every $L\in\binom{V(W^k_{\ell,q})}{k-2}$.

Fix an arbitrary $L\in\binom{V(W^k_{\ell,q})}{k-2}$. Assume without loss of generality that $L$ intersects $k-2$ distinct vertex classes of $W^k_{\ell,q}$, as otherwise $G_L$ is empty and our assertion follows. Let $V_a$ and $V_b$ be the two vertex classes disjoint from $L$, where $a<b$. Write
\[L=\{(i,x_i):\,i\in[k]\setminus\{a,b\}\}\qquad\text{and}\qquad x_i=(x_{i,1},\dots,x_{i,\ell})\in\mathbb{F}_q^\ell.\]
Let $G_L'$ be the induced subgraph of $G_L$ on $V_a\cup V_b$. As all non-isolated vertices of $G_L$ lie in $V_a\cup V_b$, it suffices to show that $G_L'$ is $C_{2\ell}$-free.

Let $(a,u)\in V_a$ and $(b,v)\in V_b$, where
\[u=(u_1,\ldots,u_\ell)\in\mathbb{F}_q^\ell\qquad\text{and}\qquad v=(v_1,\ldots,v_\ell)\in\mathbb{F}_q^\ell.\]
For $d=2,\dots,\ell$, define
\begin{align*}
P_d(Z)&=\sum_{i\in[k]\setminus\{a,b\},i<a}x_{i,1}Z^{d-1}+\sum_{j\in[k]\setminus\{a,b\},j>a}Zx_{j,1}^{d-1},\\
Q_d(Z)&=\sum_{i\in[k]\setminus\{a,b\},i<b}x_{i,1}Z^{d-1}+\sum_{j\in[k]\setminus\{a,b\},j>b}Zx_{j,1}^{d-1},\\
c_d&=\sum_{i,j\in[k]\setminus\{a,b\},i<j}x_{i,1}x_{j,1}^{d-1}-\sum_{i\in[k]\setminus\{a,b\}}x_{i,d}.
\end{align*}
Separating in~\eqref{eq:k_uniform_Wenger} the terms involving $u$, the
terms involving $v$, and the terms determined by $L$, we see that
$L\cup\{(a,u),(b,v)\}$ is an edge of $W^k_{\ell,q}$ if and only if
\begin{equation}
\label{eq:link_Wenger}
u_d+v_d=u_1v_1^{d-1}+P_d(u_1)+Q_d(v_1)+c_d\qquad\text{for all $d=2,\dots,\ell$.}
\end{equation}

Define the coordinate maps $\Phi,\Psi:\mathbb{F}_q^\ell\to\mathbb{F}_q^\ell$ by
\begin{align*}
\Phi(u)&=\left(u_1,u_2-P_2(u_1),\dots,u_\ell-P_\ell(u_1)\right),\\
\Psi(v)&=\left(v_1,v_2-Q_2(v_1)-c_2,\dots,v_\ell-Q_\ell(v_1)-c_\ell\right).
\end{align*}
It is immediate that both maps are bijective. Moreover, under the coordinate changes, the system of equations~\eqref{eq:link_Wenger} transforms into
\begin{equation}
\label{eq:new_coordinates}
\Phi(u)_d+\Psi(v)_d=\Phi(u)_1\Psi(v)_1^{d-1}\qquad\text{for all $d=2,\dots,\ell$.}
\end{equation}
Now since two vertices $(a,u)$ and $(b,v)$ are adjacent in $G_L'$ if and only if~\eqref{eq:new_coordinates} holds, the graph $G_L'$ is isomorphic to the original Wenger graph $W_{\ell,q}$. Recall that $W_{\ell,q}$ is $C_{2\ell}$-free for $\ell\in\{2,3,5\}$. This completes the proof of~\Cref{lem:Wenger_C6C10}.
\end{proof}

\subsection{Proof of~\Cref{thm:C6C10}}
Let $k\geq2$ be a fixed integer and $\ell\in\{3,5\}$. Let $n\in\mathbb{N}$ be sufficiently large. The upper bound $\ex(n,\cS^kC_{2\ell})=O(n^{k-1+1/\ell})$ follows from~\Cref{prop:longer_cycles}. To establish the lower bound, we consider the $k$-uniform hypergraph $W^k_{\ell,q}$, where $(1-o(1))(n/k)^{1/\ell}\leq q\leq(n/k)^{1/\ell}$ is a prime power. In particular, $\lvert{V(W^k_{\ell,q})}\rvert=kq^\ell\leq n$. Since $W^k_{\ell,q}$ is $\cS^kC_{2\ell}$-free by~\Cref{lem:Wenger_C6C10},
\[\ex(n,\cS^kC_{2\ell})\geq\lvert{E(W^k_{\ell,q})}\rvert=q^{k\ell-\ell+1}=\Omega(n^{k-1+1/\ell}),\]
where we used the edge count from~\Cref{lem:Wenger_edges} together with $q\geq(1-o(1))(n/k)^{1/\ell}$.\qed

\section{Applications to simplicial Tur\'an problems}
\label{sec:applications}
Given an integer $k\geq2$, a $(k-1)$-dimensional (abstract) simplicial complex $\FF=(V(\FF),E(\FF))$ is a non-uniform hypergraph without isolated vertices, where $E(\FF)$ is closed under taking subsets and has maximum edge size $k$. Let $\ex(n,\FF)$ denote the maximum number of edges in an $n$-vertex $\FF$-free simplicial complex. The systematic study of simplicial Tur\'an problems was recently initiated by Conlon, Piga, and Sch\"ulke~\cite{CPS23} and subsequently developed by Axenovich, Gerbner, Patk\'os, and the author~\cite{AGLP25}.

For a $k$-uniform hypergraph $H$ without isolated vertices, let $\cD(H)$ denote the $(k-1)$-dimensional simplicial complex obtained by taking the downward closure of $E(H)$. It was shown in~\cite{AGLP25} that
\begin{equation}
\label{eq:simplicial}
\ex(n,\cD(H))=\ex^{\cl}(n,H)+\sum_{r=0}^{k-1}\binom{n}{r},
\end{equation}
where $\ex^{\cl}(n,H)$ is the maximum number of cliques of order at least $k$ contained in an $n$-vertex $H$-free $k$-uniform hypergraph. In~\cite{AGLP25} it was observed that $\ex(n,\cD(\cS^kC_4))=\Theta(n^{k-1/2})$. As a direct application of~\Cref{thm:C4}, we determine the asymptotics of $\ex(n,\cD(\cS^kC_4))$ for infinitely many $n$.

\begin{corollary}
\label{cor:simplicial_C4}
Let $k\geq2$ be an integer. Let $n$ range over the sequence of powers of four. Then
\[\ex(n,\cD(\cS^kC_4))=\left(\frac{1}{k!}+\frac{1}{(k+1)!}\right)n^{k-1/2}+O(n^{k-1}).\]
\end{corollary}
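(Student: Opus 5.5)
Plan: combine \eqref{eq:simplicial} with \Cref{thm:C4}, \eqref{upper_bound} and \eqref{Mubayi_C4}. By \eqref{eq:simplicial},
\[\ex(n,\cD(\cS^kC_4))=\ex^{\cl}(n,\cS^kC_4)+\sum_{r=0}^{k-1}\binom{n}{r},\]
and the sum is $O(n^{k-1})$. So it suffices to show
\[\ex^{\cl}(n,\cS^kC_4)=\left(\frac{1}{k!}+\frac{1}{(k+1)!}\right)n^{k-1/2}+O(n^{k-1})\]
for $n$ a power of four.

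For the upper bound, recall that an $\cS^kC_4$-free $k$-uniform hypergraph contains no clique of order greater than $k+1$. Indeed, $K^k_{k+2}$ contains $\cS^kC_4$: fix $k-2$ vertices as the suspension set and take a $4$-cycle on the remaining four vertices. Hence the cliques of order at least $k$ are exactly the edges together with the copies of $K^k_{k+1}$. Their number is at most
\[\ex(n,\cS^kC_4)+\ex(n,K^k_{k+1},\cS^kC_4)\le \frac{n^{k-1/2}}{k!}+\frac{n^{k-1/2}}{(k+1)!}+O(n^{k-1})\]
by \eqref{Mubayi_C4} and \eqref{upper_bound}.

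For the lower bound, take $n=4^{m'}$, that is $n=2^m$ with $m$ even, and use the hypergraph $\mathcal{H}_{p,q}$ with $p=q=\sqrt n$ from the proof of \Cref{thm:C4}. It is $\cS^kC_4$-free by \Cref{lem:block_C4}, and by \Cref{lem:counting_blocks} it contains at least $N:=\frac{n^{k-1/2}}{(k+1)!}-n^{k-1}$ copies of $K^k_{k+1}$.

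Next count its edges. In an $\cS^kC_4$-free hypergraph every edge lies in at most one copy of $K^k_{k+1}$. Each copy contains $k+1$ edges, so these copies are edge-disjoint and together account for at least $(k+1)N=\frac{n^{k-1/2}}{k!}-(k+1)n^{k-1}$ distinct edges. Therefore the number of cliques of order at least $k$ in $\mathcal{H}_{p,q}$ is at least
\[(k+1)N+N=\left(\frac{1}{k!}+\frac{1}{(k+1)!}\right)n^{k-1/2}-O(n^{k-1}),\]
which matches the upper bound.

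No step is a real obstacle. The only points to check are that cliques of order above $k+1$ cannot occur, and that the $K^k_{k+1}$ copies are edge-disjoint, so edges can be counted through cliques. Both follow from the observations stated before \eqref{upper_bound}.
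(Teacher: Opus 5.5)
Your proposal is correct and is essentially the paper's proof. The upper bound uses the same ingredients: \eqref{eq:simplicial}, the absence of cliques of order above $k+1$, \eqref{Mubayi_C4} and \eqref{upper_bound}. The lower bound uses the same count, namely at least $(k+1)N$ edges plus $N$ copies of $K^k_{k+1}$, which follows because each edge lies in at most one such copy. The only cosmetic difference is that you apply this count directly to $\mathcal{H}_{p,q}$ with $p=q=\sqrt{n}$, whereas the paper applies it to a hypergraph attaining $\ex(n,K^k_{k+1},\cS^kC_4)$ and then invokes \Cref{thm:C4}.
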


\begin{proof}[Proof of~\Cref{cor:simplicial_C4}]
Recall that any $\cS^kC_4$-free $k$-uniform hypergraph contains no clique of order greater than $k+1$. Then, from~\eqref{eq:simplicial} it follows that
\begin{align*}
\ex(n,\cD(\cS^kC_4))=\ex^{\cl}(n,\cS^kC_4)+O(n^{k-1})&\leq\ex(n,\cS^kC_4)+\ex(n,K^k_{k+1},\cS^kC_4)+O(n^{k-1})\\
&\leq\left(\frac{1}{k!}+\frac{1}{(k+1)!}\right)n^{k-1/2}+O(n^{k-1}),
\end{align*}
where in the last step we used~\eqref{Mubayi_C4} and~\eqref{upper_bound}. For the lower bound, note that every edge in an $\cS^kC_4$-free hypergraph is contained in at most one copy of $K^k_{k+1}$. Thus, if an $\cS^kC_4$-free $k$-uniform hypergraph contains $\ex(n,K^k_{k+1},\cS^kC_4)$ copies of $K^k_{k+1}$, then it necessarily has at least $(k+1)\ex(n,K^k_{k+1},\cS^kC_4)$ edges. Then by~\Cref{thm:C4} we have
\begin{align*}
\ex(n,\cD(\cS^kC_4))&=\ex^{\cl}(n,\cS^kC_4)+O(n^{k-1})\\
&\geq(k+1)\ex(n,K^k_{k+1},\cS^kC_4)+\ex(n,K^k_{k+1},\cS^kC_4)+O(n^{k-1})\\
&\geq\left(\frac{1}{k!}+\frac{1}{(k+1)!}\right)n^{k-1/2}+O(n^{k-1}).\qedhere
\end{align*}
\end{proof}

Furthermore, it was asked in~\cite{AGLP25} which non-integer exponents other than $k-1/2$ can occur for the extremal numbers of $(k-1)$-dimensional simplicial complexes. As an application of~\Cref{thm:C6C10}, we prove that, for every $k\geq2$, there exist $(k-1)$-dimensional simplicial complexes whose extremal numbers have exponents $k-2/3$ and $k-4/5$. This yields two new possible exponents for $\ex(n,\FF)$.

\begin{corollary}
\label{cor:simplicial_C6C10}
Let $k\geq2$ be an integer and $\ell\in\{3,5\}$. Then
\[\ex(n,\cD(\cS^kC_{2\ell}))=\Theta(n^{k-1+1/\ell}).\]
\end{corollary}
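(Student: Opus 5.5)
The plan is to use identity~\eqref{eq:simplicial}, which gives
\[\ex(n,\cD(\cS^kC_{2\ell}))=\ex^{\cl}(n,\cS^kC_{2\ell})+O(n^{k-1}).\]
Since $k-1+1/\ell>k-1$, the additive $O(n^{k-1})$ term is negligible. It therefore suffices to show that $\ex^{\cl}(n,\cS^kC_{2\ell})=\Theta(n^{k-1+1/\ell})$.

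For the lower bound, note that every edge is a clique of order $k$. Hence $\ex^{\cl}(n,\cS^kC_{2\ell})\geq\ex(n,\cS^kC_{2\ell})$, and by \Cref{thm:C6C10} the right-hand side is $\Omega(n^{k-1+1/\ell})$.

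For the upper bound, I would first bound the clique order. An $\cS^kC_{2\ell}$-free $k$-uniform hypergraph contains no clique of order $k-2+2\ell$. Indeed, such a clique contains every $k$-set on $k-2+2\ell$ vertices, and so contains a copy of $\cS^kC_{2\ell}$, which has exactly $k+2\ell-2$ vertices. Consequently
\[\ex^{\cl}(n,\cS^kC_{2\ell})\leq\sum_{t=k}^{k+2\ell-3}\ex(n,K^k_t,\cS^kC_{2\ell}).\]
This is a sum of a constant number of terms. Each term has $k\leq t<k+(2\ell)-2$, so \Cref{prop:longer_cycles} applies with cycle length $2\ell$, which is even. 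It gives $\ex(n,K^k_t,\cS^kC_{2\ell})=O(n^{k-1+2/(2\ell)})=O(n^{k-1+1/\ell})$. Summing the terms yields the matching upper bound.

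No step is a real obstacle. The only point needing care is checking that the range of $t$ covered by \Cref{prop:longer_cycles} includes every possible clique order, and that bound is exactly supplied by the vertex count of $\cS^kC_{2\ell}$.
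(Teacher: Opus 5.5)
Your proof is correct and follows essentially the same route as the paper. The lower bound uses $\ex^{\cl}\geq\ex$ together with \Cref{thm:C6C10}, and the upper bound sums \Cref{prop:longer_cycles} over clique orders via \eqref{eq:simplicial}. Your remark that no clique of order $k+2\ell-2$ can occur, so the sum stops at $k+2\ell-3$ (exactly the range covered by \Cref{prop:longer_cycles}), is slightly cleaner than the paper: its sum runs to $k+2\ell-2$ and tacitly relies on that top term being zero.
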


\begin{proof}[Proof of~\Cref{cor:simplicial_C6C10}]
For the lower bound, by~\Cref{thm:C6C10} and~\eqref{eq:simplicial} we have
\[\ex(n,\cD(\cS^kC_{2\ell}))\geq\ex^{\cl}(n,\cS^kC_{2\ell})\geq\ex(n,\cS^kC_{2\ell})=\Theta(n^{k-1+1/\ell}).\]
For the upper bound, we apply~\Cref{prop:longer_cycles} together with~\eqref{eq:simplicial}, which gives
\begin{align*}
\ex(n,\cD(\cS^kC_{2\ell}))\leq\ex^{\cl}(n,\cS^kC_{2\ell})+O(n^{k-1})&\leq\sum_{t=k}^{k+2\ell-2}\ex(n,K^k_t,\cS^kC_{2\ell})+O(n^{k-1})\\
&=O(n^{k-1+1/\ell}).\qedhere
\end{align*}
\end{proof}

\section*{Acknowledgements}
The author would like to thank D\'aniel Gerbner for bringing Mubayi's results on $\ex(n,\cS^kK_{s,t})$ to his attention. The author declares the use of AI assistance in proof discussions.

\end{document}